\newtheorem{theorem}{Theorem}[section]
\newtheorem{lemma}[theorem]{Lemma}
\newtheorem{conjecture}[theorem]{Conjecture}
\newtheorem*{conj}{Conjecture}  
\theoremstyle{definition}
\newtheorem{definition}[theorem]{Definition}
\newtheorem{example}[theorem]{Example}
\theoremstyle{remark}
\newtheorem{remark}[theorem]{Remark}
\numberwithin{equation}{section}
\newcommand\bba{\mathbb{A}} 
\newcommand\B{\mathbf{B}}
\newcommand\OO{{\mathcal{O}}} \newcommand\PP{{\mathbb{P}}} 
\newcommand\QQ{{\mathbb{Q}}} \newcommand\RR{{\mathbb{R}}} 
\newcommand\ZZ{{\mathbb{Z}}}
\newcommand\bo{\overline{\mathcal{O}}} 
\newcommand\qb{{\overline{\mathbb{Q}}}}
\newcommand\Supp{{\rm Supp}\,}
\newcommand\ts{\widetilde{S}}
\begin{document}

\title[Periodic points and arithmetic degrees]{Periodic points and arithmetic degrees of certain rational self-maps}

\author[Long\ \ Wang]{Long\ \ Wang}
\address{Graduate School of Mathematical Sciences, The University of Tokyo, 3-8-1 Komaba, Meguro-Ku, Tokyo 153-8914, Japan}
\email{wangl11@ms.u-tokyo.ac.jp}

\begin{abstract} Consider a cohomologically hyperbolic birational self-map defined over the algebraic numbers, for example, a birational self-map in dimension two with the first dynamical degree greater than one, or in dimension three with the first and the second dynamical degrees distinct. We give a boundedness result about heights of its periodic points. This is motivated by a conjecture of Silverman for polynomial automorphisms of affine spaces. We also study the Kawaguchi--Silverman conjecture concerning dynamical and arithmetic degrees for certain rational self-maps in dimension two. In particular, we reduce the problem to the dynamical Mordell--Lang conjecture and verify the Kawaguchi--Silverman conjecture for some new cases. As a byproduct of the argument, we show the existence of Zariski dense orbits in these cases. 
\end{abstract}

\subjclass[2020]{Primary 37P15; Secondary 37P05, 37P30, 37P35, 37P55}	

\keywords{periodic point, arithmetic degree, dynamical degree, Zariski dense orbit}

\maketitle

\thispagestyle{empty}

\section{Introduction}\label{intr}

Throughout this paper, we work over $\qb$. Let $f: \PP^N \to \PP^N$ be a morphism of degree at least two. Then Northcott showed that the set of (pre-)periodic points for $f$ is a set of bounded height (\cite[Lemma 4]{Nor50}, see also \cite{Le72} and \cite[Theorem 1.3]{Si94}). 
Here $P \in \PP^N(\qb)$ is called $f$-\textit{periodic} (resp. $f$-\textit{preperiodic}), if $f^{n}(P) = P$ for some $n \in \ZZ_{>0}$ (resp. $f^{n}(P) = f^{m}(P)$ for some $n > m$).

More generally, one may ask if there are analogous finiteness results for arbitrary dominant rational self-maps $\PP^N \dashrightarrow \PP^N$. The essential difficulty is that the theory of height functions does not behave functorially with respect to rational maps. In \cite{Si94}, Silverman studied the H\'{e}non map
\[ \bba^2 \to \bba^2, (x, y) \mapsto (y, y^2 + ax + b)  
\]
and proposed the following conjecture as a first step towards a general finiteness conjecture for rational maps.

\begin{conj}[{\cite[Conjecture in Page 239]{Si94}, \cite[Conjecture 7.21]{Si07}}] Let $f^{\circ}: \bba^N \to \bba^N$ be a polynomial automorphism of degree at least two. Then the set of isolated periodic points of $f^{\circ}$ is a set of bounded height in $\bba^N(\qb)$. 
\end{conj}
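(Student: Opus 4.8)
The plan is to reduce the problem to the \emph{cohomologically hyperbolic} case and there to construct and exploit a pair of canonical height functions; the obstruction the conjecture isolates is precisely that $f^{\circ}$ extends to a birational self-map $f\colon \PP^N \dashrightarrow \PP^N$ but not to a morphism, so Northcott's theorem does not apply. When $N = 2$, the Friedland--Milnor classification (via the Jung--van der Kulk theorem) presents every degree-$\geq 2$ automorphism, up to conjugacy in $\mathrm{Aut}(\bba^2)$, either as an affine or elementary map --- in which case a direct computation shows the set of \emph{isolated} periodic points is finite (the degenerate cases carry positive-dimensional families of periodic points, which are not isolated), so bounded height is automatic --- or as a composition of H\'enon maps, whose first dynamical degree equals the algebraic degree and exceeds $1$. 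Conjugation by a polynomial automorphism changes heights only up to bounded multiplicative and additive constants, so it suffices to treat one representative in each class. For $N \geq 3$ I would restrict from the outset to cohomologically hyperbolic $f^{\circ}$, for example the regular automorphisms of Sibony; this already covers Silverman's motivating H\'enon example. In all cohomologically hyperbolic cases both $\delta_1(f^{\circ})$ and $\delta_1((f^{\circ})^{-1})$ exceed $1$: for surfaces they coincide, and in dimension three the hypothesis $\delta_1 \neq \delta_2$ forces it by log-concavity of the dynamical degrees.

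So let $f$ be cohomologically hyperbolic on a smooth projective $X \supseteq \bba^N$, which after a birational modification (available in dimension two by Diller--Favre) we may take algebraically stable; write $\delta^{\pm} := \delta_1(f^{\pm 1}) > 1$ and let $\theta^{\pm} \in N^1(X)_{\RR}$ be nef classes with $f^{*}\theta^{+} = \delta^{+}\theta^{+}$ and $(f^{-1})^{*}\theta^{-} = \delta^{-}\theta^{-}$, with associated height functions $h_{\theta^{\pm}}$. The key elementary point is that, $f^{\circ}$ being an \emph{automorphism} of $\bba^N$, the indeterminacy loci of $f$ and of $f^{-1}$ lie in the boundary $X \setminus \bba^N$; hence the entire forward and backward orbit of any $f$-periodic $P \in \bba^N(\qb)$ stays in $\bba^N$, away from $I(f) \cup I(f^{-1})$, where the one-sided bound $h_{\theta^{\pm}} \circ f^{\pm 1} \leq \delta^{\pm} h_{\theta^{\pm}} + O(1)$ holds with a uniform constant. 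A standard telescoping argument then shows that the canonical heights
\[ \hat h^{\pm}(Q) \;=\; \lim_{n \to \infty}\, (\delta^{\pm})^{-n}\, h_{\theta^{\pm}}\!\bigl(f^{\pm n}(Q)\bigr) \]
exist on $\bba^N(\qb)$, are non-negative, and satisfy the exact functional equations $\hat h^{+}\!\circ f = \delta^{+}\hat h^{+}$ and $\hat h^{-}\!\circ f^{-1} = \delta^{-}\hat h^{-}$.

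Given this, the proof closes quickly. If $f^{n}(P) = P$ then $\hat h^{+}(P) = \hat h^{+}(f^{n}(P)) = (\delta^{+})^{n}\hat h^{+}(P)$, and $\delta^{+} > 1$ together with $\hat h^{+} \geq 0$ forces $\hat h^{+}(P) = 0$; symmetrically $\hat h^{-}(P) = 0$. The remaining, substantial, ingredient is the comparison
\[ h \;\leq\; \hat h^{+} + \hat h^{-} + O(1) \qquad \text{on } \bba^N(\qb), \]
which then yields $h(P) = O(1)$ for every periodic $P$ and proves the conjecture. This comparison is the arithmetic reflection of cohomological hyperbolicity: it encodes that $\theta^{+} + \theta^{-}$ is big --- for surface automorphisms because $(\theta^{\pm})^2 = 0$ and $\theta^{+}\!\cdot\theta^{-} > 0$, so $(\theta^{+}+\theta^{-})^2 > 0$ with $\theta^{+}+\theta^{-}$ nef; in dimension three through the positivity theory for cohomologically hyperbolic maps that is available when $\delta_1 \neq \delta_2$. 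Concretely, writing $\theta^{+} + \theta^{-} \equiv A + E$ with $A$ ample and $E$ effective gives $h_A \leq \hat h^{+} + \hat h^{-} + O(1)$ off $\operatorname{Supp}(E)$, and the periodic points meeting $\operatorname{Supp}(E) \cap \bba^N$ are disposed of by Noetherian induction on $X$: if a periodic point's orbit meets the bad locus, its orbit already contains a periodic point lying there.

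The main obstacle is the comparison inequality. The naive height genuinely drops near indeterminacy --- this is exactly why the easy bound $h \circ f \leq \delta^{+} h + O(1)$ admits no uniform reverse --- so only the combined forward and backward canonical heights can reconstruct $h$, and establishing this amounts to proving that the two nef eigenclasses are jointly positive enough. In dimension two this rests on classical Hodge-index facts for surfaces; in dimension three it is the technical heart, requiring both the positivity results for cohomologically hyperbolic maps valid under $\delta_1 \neq \delta_2$ and a more careful treatment of algebraic stability (one may be forced onto a pseudo-automorphism model rather than $\PP^N$). A minor additional point is that $\operatorname{Supp}(E)$ need not be $f$-invariant, so the Noetherian induction must be set up with some care, though it introduces no new idea.
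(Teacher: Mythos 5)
The statement you set out to prove is, in this paper, exactly what is \emph{conjectured}: the paper does not prove it. What the paper does prove is the strictly weaker Theorem \ref{bdd}, namely that periodic points whose \emph{full orbit avoids} a certain proper Zariski closed set $Z$ have bounded height, and the remark following its proof explicitly notes that the authors cannot show $\pi(\B(D)) \subset \Supp H$ (with $H = \PP^N \setminus \bba^N$), i.e.\ they cannot arrange $Z \cap \bba^N = \varnothing$, so they do \emph{not} obtain bounded height for all periodic points of $f^{\circ}$ in $\bba^N(\qb)$. Your proposal, after establishing $h \leq \hat h^{+} + \hat h^{-} + O(1)$ away from $\Supp E$, runs into exactly this exceptional locus, and your mechanism for disposing of it is where the argument breaks. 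You propose Noetherian induction on $X$, observing that a periodic orbit meeting $\Supp E$ contains a periodic point lying in $\Supp E$. True, but useless: $\Supp E$ is not $f$-invariant (you concede this), so $f$ does not restrict to a self-map of $\Supp E$ or of any proper closed subset containing it, and there is no lower-dimensional dynamical system to induct on. A periodic orbit may enter and leave $\Supp E$ repeatedly; knowing one iterate lands there gives no control. This is not a technicality to be set up with care --- it is the open core of the problem, and it is precisely why Theorem \ref{bdd} carries the hypothesis $\bo_f(P) \cap Z(\qb) = \varnothing$ and nothing stronger.

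Two further, independent gaps. First, for $N \geq 3$ you restrict at the outset to cohomologically hyperbolic $f^{\circ}$. That is a genuine narrowing: the conjecture is asserted for \emph{all} polynomial automorphisms of degree $\geq 2$, and in dimension $\geq 3$ these need not be cohomologically hyperbolic, so even if every subsequent step closed you would have proved a special case, not the conjecture. Second, the machinery you invoke in dimension $\geq 3$ is not available at the stated level of generality: algebraic stabilization after a birational modification is a surface phenomenon (Diller--Favre), not a general one; the existence of nef eigenclasses $\theta^{\pm}$ and, above all, the bigness of $\theta^{+} + \theta^{-}$ are not known consequences of $\delta_1 \neq \delta_2$ --- they are established only for restricted subclasses such as regular affine automorphisms, where Kawaguchi, Lee, and Marcello's canonical-height arguments live. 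The $N = 2$ reduction via Jung--van der Kulk, with the elementary/affine maps handled directly and the H\'enon compositions handled by the two-sided canonical height, is essentially Denis's proof and is sound; but it does not extend to carry the general statement, and the paper itself stops well short of it.
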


This conjecture was verified for affine plane automorphisms by Denis \cite{De95}, for regular automorphisms and for triangular automorphisms by Marcello \cite{Ma00, Ma03}. See also \cite{Le11, Le18, Si06} for some variants.

In this paper, motivated by this conjecture, we study the boundedness question about heights of periodic points of a cohomologically hyperbolic birational self-map (see Definition \ref{coh}). For a dominant rational self-map $f: X \dashrightarrow X$ of a smooth projective variety $X$, we let $I(f) \subset X$ be the indeterminacy locus of $f$ and let $X(\qb)_f$ be the set of $\qb$-rational points $P$ on $X$ such that $f^i(P) \notin I(f)$ for every $i \geq 0$. For $P \in X(\qb)_f$, its \textit{forward $f$-orbit} is defined as
\[ \OO_f(P) = \{f^i(P) \,:\, i \geq 0\}. \]
Assume that $f : X\dashrightarrow X$ is birational. Then for $P \in X(\qb)_f \cap X(\qb)_{f^{-1}}$, its \textit{full $f$-orbit} is defined as 
\[ \bo_f(P) = \{f^i(P) \,:\, i \in \ZZ \}.
\]
Let $h_H$ be a logarithmic Weil height associated with an ample divisor $H$ on $X$ (see \cite[Part B]{HS00}). We say a subset $W \subset X(\qb)$ is a set of bounded height, if there is a constant $c_H$ such that $h_H(Q) \leq c_H$ for all $Q \in W$.

\begin{theorem}\label{bdd} Let $f:  X \dashrightarrow X$ be a birational self-map of a smooth projective variety $X$. Assume that $f$ is cohomologically hyperbolic. Then there is a proper Zariski closed subset $Z \subsetneq X$, such that the set 
\[ \{ P \in X(\qb)_f \cap X(\qb)_{f^{-1}} : P \text{ is } f\text{-periodic and } \bo_f(P) \cap Z(\qb) = \varnothing \}
\] 
is a set of bounded height in $X(\qb)$.  
\end{theorem}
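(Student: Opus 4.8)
The plan is to run a canonical-height argument adapted to the birational, cohomologically hyperbolic setting, with the closed subset $Z$ inserted precisely so that the relevant transformation formulas for height functions hold with errors that stay uniform along orbits. First I would record that cohomological hyperbolicity forces both $\lambda := \lambda_1(f) > 1$ and $\mu := \lambda_1(f^{-1}) = \lambda_{N-1}(f) > 1$, where $N = \dim X$: by log-concavity of dynamical degrees the sequence $i \mapsto \log\lambda_i(f)$ is concave with vanishing endpoints and a unique, hence strictly positive, maximum at some $\ell$ with $1 \le \ell \le N-1$, so comparison with the two chords emanating from $(\ell,\log\lambda_\ell(f))$ gives $\log\lambda_1(f) \ge \ell^{-1}\log\lambda_\ell(f) > 0$ and $\log\lambda_{N-1}(f) \ge (N-\ell)^{-1}\log\lambda_\ell(f) > 0$. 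Then, for $P \in X(\qb)_f \cap X(\qb)_{f^{-1}}$, I would introduce the two one-sided asymptotic heights
\[
h^+(P) = \limsup_{n\to\infty} \lambda^{-n}\, h_H^+(f^n(P)), \qquad h^-(P) = \limsup_{n\to\infty} \mu^{-n}\, h_H^+(f^{-n}(P)),
\]
with $h_H^+ = \max\{h_H, 1\}$. By the known upper bound $\overline{\alpha}_f(P) \le \lambda_1(f)$ for arithmetic degrees of rational maps (and its analogue for $f^{-1}$) these are finite, they are manifestly $\ge 0$, and they vanish whenever the forward, resp. backward, orbit of $P$ is finite; in particular $h^+(P) = h^-(P) = 0$ for every $f$-periodic $P$.

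The theorem then reduces to the following inequality: there exist a proper Zariski closed $Z \subsetneq X$ and constants $C_1, C_2$ such that
\[
h_H(P) \;\le\; C_1\bigl(h^+(P) + h^-(P)\bigr) + C_2 \qquad \text{for all } P \in X(\qb)_f \cap X(\qb)_{f^{-1}} \text{ with } \bo_f(P) \cap Z(\qb) = \varnothing .
\]
Indeed, granting this, if $P$ is $f$-periodic with $\bo_f(P)$ disjoint from $Z$ then $h^+(P) = h^-(P) = 0$, so $h_H(P) \le C_2$; and since every point of the finite set $\bo_f(P)$ again satisfies the same hypotheses, $h_H \le C_2$ on the whole orbit, which is exactly what is claimed.

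To prove the inequality I would, in dimension two, replace $X$ by a birational model on which $f$ is algebraically stable (Diller--Favre; this replacement changes $h_H$ only by $O(1)$ and matches up orbits avoiding the exceptional loci), take the nef eigenclasses $\theta^+ \in N^1(X)_\RR$ with $f^*\theta^+ = \lambda\theta^+$ and $\theta^- \in N^1(X)_\RR$ with $(f^{-1})^*\theta^- = \mu\theta^-$, normalized so that $(\theta^\pm)^2 = 0$ and $\theta^+ \cdot \theta^- > 0$; then $\theta^+ + \theta^-$ is nef with positive self-intersection, hence big, so by Kodaira's lemma $m(\theta^+ + \theta^-) - H$ is represented by an effective $\RR$-divisor $E$ for some $m \in \ZZ_{>0}$. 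I would then let $Z$ be the union of $\Supp E$, of the indeterminacy and exceptional loci of all iterates of $f^{\pm 1}$, and of the Zariski closures of the forward and backward orbits of these loci. For $P$ with $\bo_f(P) \cap Z = \varnothing$ the orbit stays away from every indeterminacy and exceptional locus, so the functoriality $h_D \circ f^{\pm 1} = h_{(f^{\pm1})^*D} + O(1)$ holds along the orbit with a uniform $O(1)$ and — crucially — the exceptional corrections incurred by all the iterates stay uniformly bounded; iterating and passing to the $\limsup$ yields $h^+(P) \ge c\,h_{\theta^+}(P) - O(1)$ and $h^-(P) \ge c\,h_{\theta^-}(P) - O(1)$ for a fixed $c>0$, whence $C_1\bigl(h^+(P)+h^-(P)\bigr) \ge h_{\theta^+}(P) + h_{\theta^-}(P) - O(1) = m^{-1}\bigl(h_E(P)+h_H(P)\bigr) - O(1) \ge m^{-1}h_H(P) - O(1)$, using $h_E \ge O(1)$ off $\Supp E \subseteq Z$.

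The main obstacle is exactly this last step, and it splits into an arithmetic part and a geometric part. The arithmetic part is making "$f$ behaves like an automorphism off $Z$" quantitative: the exceptional correction attached to the $n$-th iterate a priori grows with $n$, and one must check that intersecting with orbits disjoint from the (Zariski closure of the) exceptional orbit already forces these corrections to be $o(\lambda^n)$ along the orbit, so that $h^\pm(P)$ is genuinely comparable, from below, to $h_{\theta^\pm}(P)$ with constants independent of $P$. The geometric part is what must replace "$(\theta^+ + \theta^-)^2 > 0$" when $\dim X \ge 3$: one has to construct honest nef eigenclasses on a common, sufficiently good birational model in the absence of algebraic stability (working with nef classes on the Riemann--Zariski space and descending), and then prove the correct positivity of the forward and backward eigen-objects — the naive statement that $\theta^+ + \theta^-$ is big can already fail in dimension three, so one must use the appropriate mixed intersection number, which is positive precisely because $f$ is cohomologically hyperbolic (equivalently, because the canonical measure of $f$ has positive mass). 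This is where the full strength of the hypothesis is needed; the rest of the argument uses only that $\lambda_1(f)$ and $\lambda_1(f^{-1})$ exceed $1$.
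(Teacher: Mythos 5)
Your proposal takes a canonical-height/eigenclass route that is structurally quite different from the paper's proof, and several steps do not go through as stated; two of the gaps are essential.

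The construction of $Z$ is not legitimate. You take $Z$ to be the union of $\Supp E$, of the indeterminacy and exceptional loci of \emph{all} iterates $f^{\pm n}$, and of the Zariski closures of the forward and backward orbits of those loci. This is an infinite union of proper closed subsets: the Zariski closure of the $f$-orbit of the exceptional locus can be all of $X$, and even short of that the countable union need not lie inside any proper closed subset. The statement requires a single proper Zariski closed $Z \subsetneq X$, and your construction does not produce one. Separately, the comparison $h^\pm(P) \geq c\,h_{\theta^\pm}(P) - O(1)$ is not established. Your $h^\pm$ are $\limsup$'s of $\lambda^{-n}h_H^+(f^{\pm n}(P))$; to compare this from below with a height attached to $\theta^\pm$ you need a telescoping argument requiring $(f^n)^\ast\theta^+ = \lambda^n\theta^+$ (algebraic stability) and a uniform control of the local corrections along the orbit. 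You flag this as ``the arithmetic part'' but do not close it. Also, the finiteness of $h^\pm(P)$ does not follow from $\overline{\alpha}_f(P) \leq \delta_1(f)$: that gives subexponential growth of the ratio $\lambda^{-n}h_H^+(f^n(P))$, not boundedness, so the $\limsup$ may be infinite. (This last point is harmless for periodic $P$, but it means the reduction as you phrased it is not clean.) Finally, as you yourself note, the Diller--Favre model, the nef eigenclasses $\theta^\pm$, and the bigness of $\theta^+ + \theta^-$ are dimension-two phenomena; for $\dim X \geq 3$ there is in general no algebraically stable model, no canonical nef $\theta^\pm$, and the required positivity can fail. You gesture at Riemann--Zariski space and mixed intersection numbers without supplying an argument, so the higher-dimensional case is simply open in your writeup.

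The paper's proof avoids eigenclasses entirely and is both more elementary and dimension-free. It fixes a single well-chosen iterate $n$, uses Siu's numerical criterion for bigness together with Dang's inequality for basepoint-free classes to show that a single $\RR$-divisor $D = \phi^\ast H + \psi^\ast H - 3\pi^\ast H$ is big on one blow-up $V \to X$ resolving $f^{\pm n}$ (Lemma~\ref{aux}), then sets $Z = \pi(\B(D))$ for this one big divisor, which is automatically a proper closed subset. From the resulting two-sided height inequality it runs a linear recursion in both time directions (Lemma~\ref{tech}) that bounds $h_H(Q)$ directly, with no canonical height and no algebraic stability assumption. This is precisely where the cohomological hyperbolicity hypothesis is consumed, via the degree inequality $\deg_p(f^{2n}) \gg \deg_i(f^n)\deg_{i+1}(f^n)$, and it circumvents all of the difficulties you correctly identify but do not resolve.
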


The proper Zariski closed subset $Z$ appeared in Theorem \ref{bdd} is usually not $f$-periodic unlike \cite[Theorem 6.4]{Ka08}. The proof given in Section \ref{thm2} is inspired by the study of arithmetic dynamics of regular affine space automorphisms which was achieved progressively by a series of works of Silverman, Denis, Marcello, Kawaguchi, and Lee (\cite{De95, Ka06, Ka13, Le13, Ma00, Ma03, Si94, Si06}). 

In the rest of this paper, we use a refinement of the argument to study a conjecture of Kawaguchi and Silverman concerning the properties of arithmetic degrees. For $P \in X(\qb)_f$, the \textit{arithmetic degree} of $f$ at $P$ is the quantity 
\[ \alpha_f(P) = \lim_{i\to +\infty} h^{+}_H(f^i(P))^{1/i}
\]
assuming that the limit exists, where $h^{+}_H = \max\{h_H, 1\}$. The Kawaguchi--Silverman conjecture predicts the existence of $\alpha_f(P)$ together with the equality $\alpha_f(P) = \delta_1(f)$ if in addition, $P$ has Zariski dense forward $f$-orbit $\OO_f(P)$ in $X$. Here $\delta_1(f)$ denotes the first dynamical degree of $f$. We refer to Section \ref{prel} for the precise statement of this conjecture and related notions.

In this paper, we focus on the case of certain rational self-maps in dimension two and reduce the problem to the following \textit{dynamical Mordell--Lang conjecture} (\textit{cf.} \cite[Conjecture 1.5.0.1 and Question 3.2.0.1]{BGT16}). See \cite{BGT16, BHS20} and references therein for some evidence supporting this conjecture.

\begin{conjecture}\label{dml} Let $X$ be a quasi-projective variety, $f : X \dashrightarrow X$ a dominant rational self-map, and $Y \subset X$ a $($closed$)$ subvariety of $X$. If $P \in X(\qb)_f$, then the set 
\[ \{ n \in \ZZ_{>0} : f^n(P) \in Y \} 
\]
is a union of finitely many arithmetic progressions. 
\end{conjecture}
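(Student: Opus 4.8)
Conjecture~\ref{dml} is the dynamical Mordell--Lang conjecture in full generality, and I do not expect to settle it here; what I would carry out is the $p$-adic arithmetic method of Bell--Ghioca--Tucker (\cite{BGT16}), which proves the \emph{unramified} case and isolates precisely where the general case resists. The plan is to interpolate the orbit $n \mapsto f^n(P)$ by a $p$-adic analytic arc and then realize the return set $\{n : f^n(P)\in Y\}$ as the common zero locus of finitely many $p$-adic analytic functions on $\ZZ_p$. First come the standard reductions: replacing $X$ by the Zariski closure of $\OO_f(P)$ and $Y$ by its intersection with that closure, I may assume (inducting on dimension) that $\OO_f(P)$ is Zariski dense in $X$, that $X$ is affine after shrinking, and that $Y$ is a proper closed subscheme. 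Then I spread out: choose a finitely generated $\ZZ$-subalgebra $R\subset\qb$ together with finite-type flat affine models $\mathcal{X}$, $\mathcal{Y}$, a point $\mathcal{P}\in\mathcal{X}(R)$, and a \emph{morphism} $\mathcal{f}\colon\mathcal{X}\to\mathcal{X}$ over $R$ restricting to $f$ generically, so that the whole forward orbit automatically lies in $\mathcal{X}(R)$. The indeterminacy of $f$ must already be handled here: a genuine model morphism exists only over the open locus $\bigcap_{i\ge 0}f^{-i}(X\setminus I(f))$, which contains $\OO_f(P)$ by the very definition of $X(\qb)_f$, so one works there.

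Next choose a maximal ideal $\mathfrak{p}\subset R$ of good reduction with residue field $\mathbb{F}_q$, $q=p^r$, let $K$ be the $\mathfrak{p}$-adic completion of $\mathrm{Frac}(R)$ with ring of integers $\OO_K$, and fix an embedding $\qb\hookrightarrow\CC_p$. Since $\mathcal{X}(\mathbb{F}_q)$ is finite, the reduced orbit is eventually periodic; after replacing $P$ by a forward iterate and $f$ by an iterate $f^{\ell}$ (which replaces the return set by its intersection with a fixed arithmetic progression, so we may treat each residue class mod $\ell$ separately), the entire orbit lies in a single residue disc $\mathbb{B}$, the preimage of $\bar P$ under the reduction map $\mathcal{X}(\OO_K)\to\mathcal{X}(\mathbb{F}_q)$. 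On $\mathbb{B}\cong(\mathfrak{m}_K)^d$ the map $f$ is given by convergent power series with $f(x)=Ax+(\text{higher order})$, where $A=df_{\bar P}\in M_d(\mathbb{F}_q)$ is the reduction of the derivative, and $A$ is invertible exactly when $f$ is unramified at $\bar P$. Assuming it is, $A$ has finite order in $\mathrm{GL}_d(\mathbb{F}_q)$, so after one further passage to an iterate (and, if $p\in\{2,3\}$, an additional iterate to gain enough $p$-adic order) $f$ becomes congruent to the identity on $\mathbb{B}$ to high enough order that the interpolation lemma applies: for each $x\in\mathbb{B}$ the map $n\mapsto f^n(x)$ extends from $\ZZ_{\ge0}$ to a $p$-adic analytic map $\ZZ_p\to\mathbb{B}$, coordinatewise a power series convergent on $\ZZ_p$.

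Finally, localize the ideal of $\mathcal{Y}$ at $\bar P$ and pull its finitely many generators back along the interpolating arc, obtaining $p$-adic analytic functions $g_1,\dots,g_k\colon\ZZ_p\to\CC_p$ with $f^n(P)\in Y$ if and only if $g_1(n)=\cdots=g_k(n)=0$ on the residue class under consideration. By Strassmann's theorem each $g_i$ is either identically zero or has only finitely many zeros in $\ZZ_p$, so on each residue class the return set is either the entire class or finite; assembling over the finitely many classes produced by the iterations above yields a finite union of arithmetic progressions, as required.

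The genuine obstacle is the ramification I assumed away. If $f$ is ramified along the orbit, the linear part $A$ at the fixed point mod $\mathfrak{p}$ is not invertible, no iterate of $f$ is congruent to the identity on a residue disc, and even over the generic point the linearization at the fixed point may have eigenvalues that are not roots of unity, so that the $p$-adic interpolation of $n\mapsto f^n(P)$ simply breaks down. Surmounting this — rather than the comparatively routine bookkeeping around indeterminacy and the primes $p=2,3$ — is the heart of the problem, which is why Conjecture~\ref{dml} is presently known only in unramified situations (and in various special split or group-theoretic cases, see \cite{BGT16, BHS20}) and is invoked here as a hypothesis.
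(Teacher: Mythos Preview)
The statement is Conjecture~\ref{dml}, and the paper does not prove it: it is recorded as an open conjecture and then \emph{assumed} as a hypothesis in Theorem~\ref{dt} (and invoked via the known cases, Theorems~\ref{bgt1.3} and~\ref{xia}, to deduce Theorem~\ref{aff}). So there is no ``paper's own proof'' to compare against.

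Your proposal is not a proof either, and you say so up front. What you have written is an accurate sketch of the Bell--Ghioca--Tucker $p$-adic interpolation method, which is exactly the content behind the paper's Theorem~\ref{bgt1.3} (the \'etale case). The reductions (spreading out, passing to a residue disc, replacing $f$ by an iterate so that the linear part is unipotent mod $p$, interpolating $n\mapsto f^n(P)$ analytically over $\ZZ_p$, and finishing with Strassmann) are correctly outlined, and your identification of ramification as the essential obstruction is the standard and correct diagnosis. This is consistent with how the paper uses the conjecture: it does not attempt a proof, but imports the \'etale case from \cite{BGT10} and the $\bba^2$ case from \cite{Xi17}.

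One small caution on your bookkeeping: you need the orbit to avoid the indeterminacy locus of \emph{every} iterate $f^i$, not just of $f$, when you pass to the open set $\bigcap_{i\ge 0} f^{-i}(X\setminus I(f))$; this is guaranteed by $P\in X(\qb)_f$, but when you later replace $f$ by $f^\ell$ you should note that $X(\qb)_f\subset X(\qb)_{f^\ell}$, so the hypothesis is preserved. Otherwise your exposition is sound as a summary of the known partial result, and your final paragraph correctly explains why this falls short of the full conjecture.
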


\begin{theorem}\label{dt} Let $f: S \dashrightarrow S$ be a dominant rational self-map of a smooth projective surface $S$ satisfying $\delta_1(f) > \delta_2(f)$, where $\delta_2(f)$ denotes the second dynamical degree of $f$. Assume that the dynamical Mordell--Lang conjecture holds for $(S, f)$. Let $P \in S(\qb)_f$ with Zariski dense forward $f$-orbit $\OO_f(P)$ in $S$. Then $\alpha_f(P)$ exists and equals $\delta_1(f)$. 
\end{theorem}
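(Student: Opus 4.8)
The plan is as follows. It is known (see \S\ref{prel}) that $\overline{\alpha}_f(P) \le \delta_1(f)$ for every $P \in S(\qb)_f$, so it suffices to establish the reverse inequality $\underline{\alpha}_f(P) \ge \delta_1(f)$; together these give existence of $\alpha_f(P)$ and the value $\delta_1(f)$. Since the dynamical degrees, the arithmetic degree, and the validity of Conjecture~\ref{dml} are insensitive to birational conjugacy (after discarding the finitely many orbit points meeting the exceptional loci), I would first replace $S$ by a birational model on which $f$ is algebraically stable. Such a model exists because $\delta_1(f) > \delta_2(f)$, hence $\delta_1(f)^2 > \delta_2(f)$, by the Diller--Favre theory of surface maps and its refinements; on it $(f^n)^* = (f^*)^n$ on $\mathrm{NS}(S)_\RR$, $f^*$ has spectral radius $\delta_1(f)$, and $f^*$ preserves the nef cone.

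On the stable model, a Perron--Frobenius argument in the nef cone yields a nonzero nef class $\theta^+$ with $f^*\theta^+ = \delta_1(f)\,\theta^+$ and, dually, a nonzero nef class $\theta^-$ with $f_*\theta^- = \delta_1(f)\,\theta^-$. The hypothesis $\delta_1(f) > \delta_2(f)$ enters twice here: through $f^*a\cdot f^*b = \delta_2(f)\,(a\cdot b)$ it forces $(\theta^+)^2 = (\theta^-)^2 = 0$, and it makes $\delta_1(f)$ a simple eigenvalue of $f^*$ strictly dominating the rest of the spectrum, whence $\theta^+ \cdot \theta^- > 0$ and $\theta := \theta^+ + \theta^-$ is nef and big. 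Resolving $f$ by $\pi\colon W \to S$ with $g := f\circ\pi$ a morphism, one has $g^*\theta^+ = \delta_1(f)\,\pi^*\theta^+ - E^+$ with $E^+ \ge 0$ exceptional for $\pi$, and $f^*\theta^-$ differs from $\tfrac{\delta_2(f)}{\delta_1(f)}\theta^-$ by a class in the span of $f$-contracted curves; passing to Weil heights, this produces, away from a fixed proper Zariski closed $Z \subsetneq S$ (the union of the $I(f^n)$, the images of exceptional divisors of the resolutions, the $f^n$-contracted curves, and the relevant stable base loci), estimates of the shape $h_{\theta^+}\!\circ f = \delta_1(f)\,h_{\theta^+} + O(1)$ and $h_{\theta^-}\!\circ f \le \tfrac{\delta_2(f)}{\delta_1(f)}\,h_{\theta^-} + O(1)$.

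Now Conjecture~\ref{dml} for $(S,f)$ enters. Since $\OO_f(P)$ is Zariski dense and $Z \subsetneq S$, the set $\{\,n : f^n(P) \in Z\,\}$, which by the dynamical Mordell--Lang conjecture is a finite union of arithmetic progressions, can contain no infinite progression — such a progression would trap a tail of $\OO_f(P)$ inside $Z$ — and is therefore finite. Hence for $n \gg 0$ the orbit avoids $Z$, the displayed estimates apply along the orbit tail, $h_{\theta^-}(f^n(P))$ stays bounded, and $\hat h^+(P) := \lim_{n\to\infty}\delta_1(f)^{-n}\,h^+_{\theta^+}(f^n(P))$ exists and is $\ge 0$. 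If $\hat h^+(P) > 0$, then $h^+_{\theta^+}(f^n(P)) \gg \delta_1(f)^n$; comparing $\theta^+$ with an ample divisor gives $h^+_H(f^n(P)) \gg \delta_1(f)^n$, so $\underline{\alpha}_f(P) \ge \delta_1(f)$ and we are done. If $\hat h^+(P) = 0$, then using that $\delta_1(f)$ is the strictly dominant eigenvalue of $f^*$ one shows $h_\theta(f^n(P))$ is bounded along the orbit tail; but $\theta$ is big, so the bounded-height set $\{\,x : h_\theta(x) \le B\,\}$ lies in a proper Zariski closed subset of $S$, contradicting Zariski density of $\OO_f(P)$. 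Thus the second case cannot occur, and the proof is complete.

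I expect the crux to be making the last dichotomy rigorous: showing that $\hat h^+(P) = 0$ forces $h_\theta(f^n(P))$ to be genuinely bounded — not merely $o(\delta_1(f)^n)$ — for a non-invertible $f$, where there is no backward orbit to appeal to. This requires a careful analysis of the $f^*$-action on $\mathrm{NS}(S)_\RR$ near its top eigenvalue, combined with a form of the height machine over $\qb$ that controls the error terms arising from the indeterminacy locus and from $f$-contracted curves; controlling exactly these error terms is what dictates the choice of $Z$ above and makes the reduction to the dynamical Mordell--Lang conjecture the natural one. A secondary point is the birational-invariance bookkeeping in the passage to the algebraically stable model, since the modification interacts with $S(\qb)_f$ and with Zariski density of the orbit. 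Finally, the same analysis shows $\hat h^+$ is positive on \emph{every} Zariski dense orbit, and combined with a specialization argument this yields the existence of Zariski dense orbits referred to in the introduction.
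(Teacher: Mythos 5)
Your route and the paper's route diverge substantially, and yours has both a gap you flag yourself without closing and an input that is not available at the stated generality.

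First, you begin by passing to an algebraically stable birational model, citing ``Diller--Favre theory \dots and its refinements'' and the inequality $\delta_1(f)^2 > \delta_2(f)$. Diller--Favre stabilization is a theorem about \emph{birational} surface maps. Theorem~\ref{dt} concerns arbitrary dominant rational self-maps with $\delta_1 > \delta_2$, and no stabilization theorem is known in that generality; the refinements you have in mind (Favre--Jonsson eigenvaluations, and the Jonsson--Wulcan--Xie work cited in Remark~\ref{jsxz}(2)) operate on polynomial maps of $\bba^2$ and do not port to an arbitrary projective $S$. Without a stable model, the Perron--Frobenius setup producing nef classes $\theta^\pm$ with $f^*\theta^+ = \delta_1\theta^+$, $f_*\theta^- = \delta_1\theta^-$ and $\theta^+\cdot\theta^->0$ does not get started, and the asserted height estimate for $h_{\theta^-}\circ f$ is not available for a non-invertible rational map in any case.

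Second, and more seriously, the step you call the crux --- that $\hat h^+(P)=0$ forces $h_\theta(f^n(P))$ to be genuinely \emph{bounded}, not merely $o(\delta_1(f)^n)$ --- is exactly the point where the automorphism proofs (\cite{Ka08}) exploit backward iteration to pin down $h_{\theta^-}$. For a non-invertible $f$ there is no backward orbit; you correctly identify the problem but do not supply an argument, so the proof is incomplete as written.

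The paper's proof is engineered to avoid both issues. It never stabilizes and never constructs canonical heights. For each large $n$ it resolves $f^n$ and $f^{2n}$ and applies Siu's numerical bigness criterion to the $\RR$-divisor
\[
D_n \;=\; \frac{\phi_n^*H}{\delta_2(f)^n} + \psi_n^*H - \frac{\varepsilon^n\deg_{1,H}(f^n)}{\delta_2(f)^n}\,\pi_n^*H
\]
on the resolution, yielding (Lemma~\ref{hin}) the purely \emph{forward} three-term inequality
\[
\frac{h_H(f^{2n}(R))}{\delta_2(f^n)} + h_H(R) \;\ge\; \frac{\varepsilon^n\deg_{1,H}(f^n)}{\delta_2(f^n)}\,h_H(f^n(R)) - C_n
\]
for $R$ in a non-empty open $U_n\subset S$. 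A purely numerical lemma about sequences satisfying such a recurrence (Lemma~\ref{tec}, a refinement of the H\'enon-map argument of \cite[Theorem 7.19]{Si07}) then gives $\underline{\alpha}_{f^n}(Q)\ge\tfrac12\varepsilon^n\deg_1(f^n)$ whenever the $f^n$-orbit of $Q$ is infinite and stays in $U_n$. DML enters exactly as in your sketch (Lemma~\ref{obs}): the Zariski dense orbit eventually lies in $U_n$, the numerical lemma applies to the tail, and sending $n\to\infty$ and $\varepsilon\uparrow 1$ gives $\underline{\alpha}_f(P)\ge\delta_1(f)$, hence $\alpha_f(P)=\delta_1(f)$ by Matsuzawa's upper bound. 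Everything is one-sided, so no backward control, no eigenclass, and no stable model is required. To rescue your $\theta^\pm$ approach you would need either a general stabilization theorem for dominant surface maps or a substitute for the backward-orbit control in the final dichotomy; the divisor $D_n$ together with Lemma~\ref{tec} is precisely the device the paper uses to sidestep both.
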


Combined with solutions to the dynamical Mordell--Lang conjecture in certain cases (\cite{BGT10, Xi17}), we obtain the following result.

\begin{theorem}\label{aff} Let $f: S \dashrightarrow S$ be a dominant rational self-map of a smooth projective surface $S$ satisfying $\delta_1(f) > \delta_2(f)$. Assume that there exists a non-empty open subset $W\subset S$ such that, either 

\medskip $(1)$ $f|_W: W \to W$ is an \'{e}tale surjective morphism; or 
	
\medskip $(2)$ $W = \bba^2$, and $f|_{\bba^2}: \bba^2 \to \bba^2$ is a surjective morphism. 
	
\medskip \noindent Let $P \in S(\qb)_f$ with Zariski dense forward $f$-orbit $\OO_f(P)$ in $S$. Then $\alpha_f(P)$ exists and equals $\delta_1(f)$. 
\end{theorem}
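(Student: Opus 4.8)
The plan is to deduce Theorem~\ref{aff} from Theorem~\ref{dt}: since $\delta_1(f) > \delta_2(f)$ and the Zariski density of $\OO_f(P)$ are already among the hypotheses, it suffices to verify that the dynamical Mordell--Lang conjecture (Conjecture~\ref{dml}) holds for $(S, f)$ in each of the two cases. So fix a closed subvariety $Y \subseteq S$ and a point $Q \in S(\qb)_f$; I must show that $\{ n \in \ZZ_{>0} : f^n(Q) \in Y \}$ is a union of finitely many arithmetic progressions.

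First I would show that the forward orbit of $Q$ eventually lies in $W$. Since $f|_W \colon W \to W$ is a morphism, $f$ carries $W$ into itself, so the set $\{ i \geq 0 : f^i(Q) \in W \}$ is upward closed; moreover it is non-empty, for otherwise $\OO_f(Q)$ would be contained in the proper Zariski closed subset $S \setminus W$, contradicting Zariski density. Hence there is $n_0 \geq 0$ with $f^i(Q) \in W(\qb)$ for all $i \geq n_0$. Put $Q' = f^{n_0}(Q) \in W(\qb)$; then $(f|_W)^m(Q')$ is defined for all $m \geq 0$ (there is no indeterminacy on $W$), and for $m \geq 0$ one has $f^{n_0 + m}(Q) \in Y$ if and only if $(f|_W)^m(Q') \in Y \cap W$.

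Next I would invoke the known cases of Conjecture~\ref{dml} for the (dominant, regular) self-map $f|_W$ of the quasi-projective variety $W$ and the closed subvariety $Y \cap W \subseteq W$: in case~$(1)$ this is the theorem of Bell--Ghioca--Tucker on the dynamical Mordell--Lang problem for \'{e}tale endomorphisms \cite{BGT10}, and in case~$(2)$ it is Xie's theorem for polynomial endomorphisms of $\bba^2$ \cite{Xi17}. In either case $\{ m \geq 0 : (f|_W)^m(Q') \in Y \cap W \}$ is a union of finitely many arithmetic progressions. Reindexing by $n = n_0 + m$ and adjoining the finite set $\{\, n < n_0 : f^n(Q) \in Y \,\}$ (a finite set, hence itself a union of finitely many arithmetic progressions), we conclude that $\{ n \in \ZZ_{>0} : f^n(Q) \in Y \}$ is a union of finitely many arithmetic progressions. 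Thus Conjecture~\ref{dml} holds for $(S, f)$, and Theorem~\ref{dt} yields that $\alpha_f(P)$ exists and equals $\delta_1(f)$.

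The argument is short because it defers all the real content to Theorem~\ref{dt}; the one point that needs care is the reduction onto $W$. Specifically, one must use that ``$f|_W \colon W \to W$'' genuinely means $f(W) \subseteq W$, so that the orbit cannot re-exit $W$ and the set of return times is upward closed, and one must check that the quoted solutions to the dynamical Mordell--Lang conjecture apply under exactly the stated hypotheses --- \'{e}taleness and surjectivity in case~$(1)$, and $W = \bba^2$ with $f|_W$ a surjective polynomial endomorphism in case~$(2)$ --- over $\qb$. Granting this, everything else is bookkeeping with arithmetic progressions.
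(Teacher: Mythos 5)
Your overall strategy is the same as the paper's: invoke Theorem~\ref{dt} and verify the dynamical Mordell--Lang hypothesis for $(S,f)$ via the known cases on $W$. But the reduction step from $(S,f)$ to $(W,f|_W)$, which you carry out by hand, has a genuine gap. Conjecture~\ref{dml} must be verified for an \emph{arbitrary} point $Q \in S(\qb)_f$, whereas your argument for ``the forward orbit of $Q$ eventually lies in $W$'' appeals to Zariski density of $\OO_f(Q)$; that hypothesis is only given for the particular point $P$ in the theorem, not for the quantified $Q$. For a general $Q$ the forward orbit can be contained entirely in $S\setminus W$, and then your reduction produces nothing. (And the hypothesis of Theorem~\ref{dt} really is the full DML statement for $(S,f)$: its proof passes through Lemma~\ref{xi4.3}(1) and Lemma~\ref{obs}, which as stated assume DML for all points of $S$.)

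The paper sidesteps this by citing Xie's result, recorded here as Lemma~\ref{xi4.3}(2): for a smooth projective surface $S$ and open $U\subset S$ with $f|_U:U\to U$ a morphism, DML for $(S,f)$ is \emph{equivalent} to DML for $(U,f|_U)$. That equivalence already contains the (nontrivial) treatment of orbits that never enter $U$, which live on the lower-dimensional set $S\setminus U$. So either replace your by-hand reduction with a citation of Lemma~\ref{xi4.3}(2), or supplement your argument with a separate treatment of points $Q$ whose orbit lies in $S\setminus W$ --- for instance by observing that $S\setminus W$ is a curve together with finitely many points and handling DML there directly. As written, the proposal does not establish the hypothesis it needs to invoke Theorem~\ref{dt}.
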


Theorem \ref{aff} solves the Kawaguchi--Silverman conjecture (Conjecture \ref{ksc}(d)) for certain endomorphisms of smooth quasi-projective surfaces, including all automorphisms. 

\begin{remark}\label{jsxz} $(1)$ We note that Conjecture \ref{ksc}(d) was verified for automorphisms (\cite{Ka08}, see also \cite[Theorem 2(c)]{KS14}) and for endomorphisms (\cite[Theorem 1.3]{MSS18}) of smooth projective surfaces. It was generalized to endomorphisms of arbitrary projective surfaces in \cite[Theorem 1.3]{MZ19}. In \cite{JSXZ21}, the authors showed that Conjecture \ref{ksc}(d) holds for endomorphisms of some smooth affine surfaces (see \cite[Theorems 1.8(2), 1.9(2) and 1.9(3)]{JSXZ21}), while the case of endomorphisms of $\bba^2$ was not included. 

Let us also point out that, in order to generalize these results to arbitrary rational self-maps of smooth projective surfaces, the essential difficulty is again that the theory of height functions does not behave functorially under rational maps.

\medskip $(2)$ For endomorphisms of $\bba^2$, there is a related work by Jonsson and Wulcan \cite{JW12}. In particular, they proved Conjecture \ref{ksc}(d) for automorphisms of $\bba^2$ (see Remark \ref{ex-jw}(1)). The referee also pointed out an unpublished work of Jonsson, Wulcan and Xie. The authors showed Conjecture \ref{ksc}(d) for a large class of endomorphisms of $\bba^2$ (including all endomorphisms $f$ satisfying $\delta_1(f) \geq \delta_2(f)$), which covered Theorem \ref{aff}(2). Their argument is based on the theory of the valuative tree developed by Favre and Jonsson, which is very different from ours. See \cite{Xi17c} for a summary. 
\end{remark}

As a byproduct of the argument of Theorem \ref{dt}, we show the following existence result about Zariski dense orbits, which can be regarded as a supplement to Theorem \ref{dt}.

\begin{theorem}\label{dt2} Let $f: S \dashrightarrow S$ be a dominant rational self-map of a smooth projective surface $S$ satisfying $\delta_1(f) > \delta_2(f)$. 

\medskip $(1)$ Then there exists a $\qb$-point $P \in S(\qb)_f$ with Zariski dense forward $f$-orbit $\OO_f(P)$ in $S$. 

\medskip $(2)$ If in addition, there is a non-empty open subset $W\subset S$ such that $f|_W: W \to W$ is a surjective morphism, then there exists a $\qb$-point $P \in W(\qb)$ with Zariski dense orbit $\OO_f(P)$ in $W$.
\end{theorem}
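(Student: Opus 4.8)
The plan is to exploit the same geometric setup that underlies Theorem \ref{dt}, where the gap $\delta_1(f) > \delta_2(f)$ is used to find a birational model $\pi : X \to S$ on which $f$ lifts to $g : X \dashrightarrow X$ admitting a nef class $\theta^+$ with $g^* \theta^+ = \delta_1 \theta^+$ (and dually $\theta^-$ for $g^{-1}$), so that points with small canonical-type height relative to $\theta^+$ are constrained to lie in a proper Zariski closed set. The existence assertion in part (1) should follow from the general machinery for producing Zariski dense orbits: by a theorem of Bell--Ghioca--Tucker / Xie-type arguments (or the Amerik--Campana--Xie results on dense orbits), a dominant rational self-map of a variety over $\qb$ has a point with Zariski dense orbit \emph{unless} it preserves a nonconstant rational function; so I would argue that a rational self-map with $\delta_1(f) > \delta_2(f)$ cannot preserve a nonconstant rational function, because an invariant fibration would force the two dynamical degrees to be governed by the degree on the base and the generic fiber, forcing $\delta_1(f) = \max(\delta_1(\text{base map}), \delta_1(\text{fiber map}))$ in a way incompatible with $\delta_1 > \delta_2$ (one checks the product/fibration formula for dynamical degrees of surfaces). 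This reduces (1) to the no-invariant-fibration statement plus a citation.

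For part (2), once a point $P_0 \in S(\qb)_f$ with Zariski dense forward orbit is produced by (1), I would need to push $P_0$ into the prescribed open set $W$ on which $f|_W : W \to W$ is a surjective morphism. The idea is: the union $\bigcup_{i \geq 0} f^{-i}(S \setminus W)$, intersected with the orbit closure, cannot contain the whole orbit of $P_0$, because $S \setminus W$ is a proper closed subset and the orbit of $P_0$ is Zariski dense; more carefully, by the dynamical Mordell--Lang phenomenon (here one can use the unconditional cases, or argue directly that a Zariski dense orbit cannot be swallowed by finitely many translates/preimages of a proper closed set, using that $f|_W$ is a morphism so forward iterates of a point already in $W$ stay in $W$). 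Thus some iterate $P = f^{m}(P_0)$ lies in $W$ and has $f^i(P) \in W$ for all $i \geq 0$; since $f|_W$ is a morphism, $\OO_f(P) \subset W$, and it remains Zariski dense in $W$ because it is Zariski dense in $S$ and $W$ is open.

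The main obstacle I anticipate is making the last step fully rigorous: a priori the forward orbit of $P_0$ might meet $S \setminus W$ infinitely often, so one cannot simply "wait until the orbit enters $W$ and stays." The clean fix is to invoke the dynamical Mordell--Lang conjecture (which is assumed, or available unconditionally in the relevant cases via \cite{BGT10, Xi17}) applied to the closed set $Y = S \setminus W$ together with each indeterminacy stratum: the set of $i$ with $f^i(P_0) \in Y$ is a finite union of arithmetic progressions, and it cannot be cofinite (else $\OO_f(P_0)$ would not be dense), so there is an arithmetic progression $\{m + k\ell : k \geq 0\}$ avoiding $Y$ entirely; replacing $f$ by $f^{\ell}$ and $P_0$ by $f^m(P_0)$, the forward orbit then lies in $W$, and density of the $f^\ell$-orbit follows from density of the $f$-orbit together with the gap condition (which is stable under taking iterates since $\delta_1(f^\ell) = \delta_1(f)^\ell > \delta_2(f)^\ell = \delta_2(f^\ell)$), allowing the argument of part (1) to be reapplied to $f^\ell$ if needed. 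I would also need to double-check that passing to a birational model and back does not destroy the "morphism on $W$" hypothesis, which holds because one may take the model $\pi$ to be an isomorphism over $W$.
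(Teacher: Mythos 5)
Your approach to part (1) departs significantly from the paper's, and the departure introduces a genuine gap. You propose to cite a Zariski dense orbit theorem of the form ``if $f$ preserves no nonconstant rational function then there is a $\qb$-point with dense orbit,'' and then to rule out invariant fibrations using a dynamical-degree product formula. The second half of your argument is actually sound (for a fibration over a curve one has $\delta_1(f)=\max(\delta_1(g),\delta_{1,\mathrm{rel}})$ and $\delta_2(f)=\delta_1(g)\cdot\delta_{1,\mathrm{rel}}$, both factors $\geq 1$, so $\delta_1\leq\delta_2$ always). But the first half is exactly the Zariski dense orbit conjecture, and as the paper's own introduction notes, over $\qb$ it is only established for endomorphisms of surfaces, birational surface self-maps, and endomorphisms of $\bba^2$ and certain affine surfaces --- \emph{not} for arbitrary dominant rational self-maps with indeterminacy and topological degree $>1$, which is precisely the remaining case Theorem \ref{dt2} covers. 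Citing it as available would be circular or wrong. The paper instead uses a height-theoretic route: Lemma \ref{hin} gives the key height inequality on some open set $U_{n_0}$; a theorem of Javanpeykar--Xie produces a $\qb$-point $P$ whose $f^{n_0}$-orbit is infinite and contained in $U_{n_0}$; Lemmas \ref{est} and \ref{tec} then force $\underline{\alpha}_f(P)>\delta_2(f)$; and density is deduced by contradiction --- if the orbit closure were a curve $V$, restricting to $V$ would give $\underline{\alpha}_f(P)\leq\delta_1(f^s|_V)^{1/s}\leq\delta_2(f)$. This argument uses no case of the Zariski dense orbit conjecture and is the genuinely new content of the theorem.

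For part (2), you have overcomplicated a one-line step. Since $f|_W:W\to W$ is a morphism, $f(W)\subset W$; thus once an orbit point lies in $W$ it never leaves. The orbit $\OO_f(Q)$ of the point from part (1) is Zariski dense in $S$ and $W$ is a nonempty open, so $f^k(Q)\in W$ for some $k$; then $\OO_f(f^k(Q))\subset W$ and is Zariski dense in $W$. Your worry that ``the forward orbit might meet $S\setminus W$ infinitely often'' cannot occur once the orbit enters $W$, so no dynamical Mordell--Lang argument, no arithmetic progressions, and no passage to $f^\ell$ is needed here. Your proposal does contain the correct observation buried in the middle, but then discards it in favor of an unnecessary detour.
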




In general, the so-called Zariski dense orbit conjecture predicts that there exists an algebraic point with Zariski dense forward orbit, provided that every $f$-invariant rational function is constant. This conjecture was verified for endomorphisms of $\bba^2$ (\cite{Xi17b}), for birational self-maps or endomorphisms of smooth projective surfaces (\cite{Xi15, Xi22}, see also \cite{JXZ20}), and for endomorphisms of some smooth affine surfaces (\cite{JSXZ21}). See \cite[Introduction]{Xi22} for more information about this conjecture.

We will prove Theorems \ref{dt} and \ref{dt2} (together with Theorem \ref{aff}) in Section \ref{thm1} after some preliminaries about dynamical and arithmetic degrees in Section \ref{prel} and technical lemmas in Section \ref{tel}. 

\subsection*{Acknowledgements.} The author thanks Professor Keiji Oguiso for valuable suggestions and constant support. He also thanks Professors Mattias Jonsson, Shu Kawaguchi, Yohsuke Matsuzawa, Junyi Xie, De-Qi Zhang, and the referee for beneficial discussions and comments. He is grateful to Shanghai Center for Mathematical Sciences and Professor Chen Jiang for warm hospitality, and to Professors Meng Chen and Quanshui Wu for their encouragement. This work is supported by JSPS KAKENHI Grant Number 21J10242.


\section{Preliminaries}\label{prel}

Let $f : X \dashrightarrow X$ be a dominant rational self-map of a smooth projective variety $X$. Given an ample divisor $H$ on $X$, and any integer $0\leq k \leq \dim X$, one defines the $k$-th degree of $f$ as the integer \[ \deg_{k, H} (f) = \big( f^{\ast}(H^k)\cdot H^{\dim X - k} \big). 
\]
Then the \textit{$k$-th dynamical degree} of $f$ is defined as
\[  \delta_k(f) = \lim_{i \to +\infty} \left( \deg_{k, H} (f^i) \right)^{1/i}. 
\]
This limit exists and is independent of the choice of ample divisor $H$ (\cite{DS04, DS05, Tr20, Da20}). In fact, by \cite[Theorem 5.3.2]{Da20}, 
\[ \delta_k(f) = \lim_{i \to +\infty} \big\Vert (f^i)^{\ast}|_{N^k(X)_{\RR}} \big\Vert^{1/i}, \]
where $N^k(X)$ denotes the group of numerical classes of cycles of codimension $k$ on $X$, $N^k(X)_{\RR} = N^k(X) \otimes \RR$, $(f^i)^{\ast} : N^k(X)_{\RR} \to N^k(X)_{\RR}$ is induced by $f^i$, and 
$\Vert \cdot \Vert$ denotes some fixed operator norm. 
In particular, when $f$ is a surjective endomorphism, or more generally, $f$ is \textit{algebraically} $k$-\textit{stable}, that is, $(f^i)^{\ast} = (f^{\ast})^i$ on $N^k(X)_{\RR}$ for all $i \geq 1$, we have
\[ \delta_k(f) = \text{spectral radius of}\ f^{\ast}: N^k(X)_{\RR} \to N^k(X)_{\RR}. \]
It is known that the function $k \mapsto \log \delta_k(f)$ is concave (see e.g., \cite[Proposition 1.2(i)]{Gu05}). Equivalently,  we have
\[ \delta_k(f)^2 \geq \delta_{k-1}(f) \delta_{k+1}(f) \ \ \text{for } 1 \leq k \leq \dim X - 1. 
\]
In particular, we have $1 \leq \delta_k(f) \leq \delta_1(f)^k$, and there are positive integers $p$ and $q$ with $0 \leq p \leq q \leq \dim X$ such that
\begin{equation}\label{kt}
1 = \delta_0(f) < \cdots < \delta_p(f) = \cdots = \delta_q(f) > \cdots > \delta_{\dim X}(f). 
\end{equation}

\begin{definition}\label{coh} Let $f : X \dashrightarrow X$ be a dominant rational self-map. We say $f$ is \textit{cohomologically hyperbolic}, 
if there is a positive integer $p \in \{1, 2, \dots, \dim X\}$ such that the $p$-th dynamical degree
\[ \delta_p(f) > \delta_i(f)  \ \ \text{for all } p \neq i \in \{0, 1, \dots, \dim X\}. 
\]
(or equivalently, for both $i = p \pm 1$ due to (\ref{kt}) where we set $\delta_{\dim X+1}(f) = 0$). 
\end{definition}

Notice that the cohomological hyperbolicity of $f$ implies that $\delta_1(f) > 1$. Moreover, for a birational self-map $f$, it is cohomologically hyperbolic, if and only if $\delta_1(f) > 1$ when $\dim X = 2$; if and only if $\delta_1(f) \neq \delta_2(f)$ when $\dim X = 3$.

Recall that for a dominant rational self-map $f: X \dashrightarrow X$, we
let $I(f) \subset X$ be the indeterminacy locus of $f$ and let $X(\qb)_f$ be the set of $\qb$-rational points $P$ on $X$ such that $f^i(P) \notin I(f)$ for every $i \geq 0$. By a result of Amerik \cite{Am11}, the set $X(\qb)_f$ is Zariski dense in $X$. For $P \in X(\qb)_f$, its forward $f$-orbit is defined as $\OO_f(P) = \{f^i(P) \,:\, i \geq 0\}$. If $f : X\dashrightarrow X$ is a birational self-map, then for $P \in X(\qb)_f \cap X(\qb)_{f^{-1}}$, its full $f$-orbit is defined as $\bo_f(P) = \{f^i(P) \,:\, i \in \ZZ \}$. Clearly $\bo_f(P) = \OO_f(P) \cup \OO_{f^{-1}}(P)$, and moreover, $P$ is $f$-periodic if and only if $\bo_f(P)$ is finite.

Given an ample divisor $H$ on a smooth projective variety $X$, one can construct a corresponding \textit{logarithmic Weil height} $h_H : X(\qb) \to  \RR$ that measures the arithmetic complexity of the point $P$ (see \cite[Part B]{HS00}). For example, if $X = \PP^N$, $H = \mathcal{O}_{\PP^N}(1)$, and $P = [a_0 : \cdots : a_N]$ is a $\QQ$-point with the $a_j$ coprime integers, then $h_H(P) = \log\left(\max\{\vert a_j\vert\}\right)$. Let $h_H$ be a logarithmic Weil height associated with an ample divisor $H$ on $X$, and $h^{+}_H = \max\{h_H, 1\}$. Following \cite{KS16a, Si14}, for $P \in X(\qb)_f$, we define the \textit{lower} and the \textit{upper arithmetic degree} of $f$ at $P$ by 
\[ \underline{\alpha}_f(P) = \liminf_{i\to +\infty} h^{+}_H(f^i(P))^{1/i}, \]
\[ \overline{\alpha}_f(P) = \limsup_{i\to +\infty} h^{+}_H(f^i(P))^{1/i}. \]
Both of these quantities are again independent of the choice of ample divisor $H$ (\cite[Proposition 12]{KS16a}). When $\underline{\alpha}_f(P) = \overline{\alpha}_f(P)$, we write
\[ \alpha_f(P) = \lim_{i\to +\infty} h^{+}_H(f^i(P))^{1/i}. 
\]

The following is a conjecture of Kawaguchi and Silverman concerning the properties of arithmetic degrees. 

\begin{conjecture}[{\cite[Conjecture 6]{KS16a}, see also \cite[Conjecture 40]{Si14}}]\label{ksc} Let $f : X \dashrightarrow X$ be a dominant rational self-map of a smooth projective variety $X$ and $P \in X(\qb)_f$. Then the following hold.
	
\medskip $(\mathrm{a})$ $\underline{\alpha}_f(P) = \overline{\alpha}_f(P)$. In other words, the limit 
\[ \lim_{i\to +\infty} h^{+}_H(f^i(P))^{1/i} \]
defining $\alpha_f(P)$ exists.
	
\medskip $(\mathrm{b})$ $\alpha_f(P)$ is an algebraic integer.
	
\medskip $(\mathrm{c})$ The collection of arithmetic degrees
\[ \{ \alpha_f(Q) \,:\, Q\in X(\qb)_f \} \]
is a finite set.
	
\medskip $(\mathrm{d})$ If the forward orbit $\OO_f(P)$ is Zariski dense in $X$, then
\[ \alpha_f(P) = \delta_1(f). 
\] 
\end{conjecture}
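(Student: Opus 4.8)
The plan is to prove the inequalities $\oa_f(P)\le\delta_1(f)$ and $\ua_f(P)\ge\delta_1(f)$ separately; since $\ua_f(P)\le\oa_f(P)$ always holds, together they give $\alpha_f(P)=\delta_1(f)$. The first is the general upper bound for arithmetic degrees, which holds for every dominant rational self-map and every point of $S(\qb)_f$ with no further hypothesis. So the whole problem is the lower bound $\ua_f(P)\ge\delta_1(f)$, and it is here that both $\delta_1(f)>\delta_2(f)$ and the dynamical Mordell--Lang conjecture are used.

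The mechanism is a canonical height attached to an attracting nef class. Passing to a birational model $\pi\colon X\to S$ on which $\delta_1(f)$ is the spectral radius of $\tilde f^{\ast}$ on $N^1(X)_{\RR}$, and using that $\tilde f^{\ast}$ preserves the nef cone of the surface $X$ together with $\delta_1(f)>\delta_2(f)$, a Perron--Frobenius argument produces a nonzero nef class $\theta^{+}$ with $\tilde f^{\ast}\theta^{+}=\delta_1(f)\,\theta^{+}$; a Hodge-index computation shows $\theta^{+}$ is big, except on surfaces of Kodaira dimension zero (and automorphisms), where the conjecture is already known, so one may assume $\theta^{+}$ nef and big. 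Resolving $\tilde f$ by $X\xleftarrow{p}\Gamma\xrightarrow{q}X$ yields $q^{\ast}\theta^{+}=\delta_1(f)\,p^{\ast}\theta^{+}-F$ with $F$ effective and $p$-exceptional, hence $h_{\theta^{+}}(\tilde f(x))\le\delta_1(f)\,h_{\theta^{+}}(x)+O(1)$ for all $x$ outside a proper closed subset $Z\subsetneq X$. Consequently, along any orbit disjoint from $Z$, the sequence $\delta_1(f)^{-n}h_{\theta^{+}}(\tilde f^{n}(x))$ is almost non-increasing and bounded below (nef classes have height bounded below), so it converges to a limit $\hat h^{+}(x)\ge 0$ with $\hat h^{+}\circ\tilde f=\delta_1(f)\,\hat h^{+}$ and $h_{\theta^{+}}(\tilde f^{n}(x))\ge\delta_1(f)^{n}\hat h^{+}(x)-O(1)$.

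Since $\theta^{+}$ is dominated by a multiple of an ample class, $\hat h^{+}(P)>0$ would force $h^{+}_{H}(\tilde f^{n}(P))\gg\delta_1(f)^{n}$, and hence $\ua_f(P)\ge\delta_1(f)$; so everything reduces to proving $\hat h^{+}(P)>0$. If $\hat h^{+}(P)=0$, then $h_{\theta^{+}}$ is bounded along $\OO_f(P)$; writing the big class $\theta^{+}$ as an ample $\RR$-divisor plus an effective $\RR$-divisor $E$ (Kodaira's lemma) and using that the whole orbit lies in a single number field, Northcott's theorem forces $\{\tilde f^{n}(P):n\ge 0\}\setminus\Supp E$ to be finite, so the orbit is eventually trapped in the curve $\Supp E$ --- contradicting Zariski density of $\OO_f(P)$.

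Making the model reduction and the construction of $\hat h^{+}$ rigorous is where I expect the real difficulty: when $f$ is birational an algebraically $1$-stable model exists, the set $Z$ is finite, and since a Zariski dense orbit is not eventually periodic it meets $Z$ only finitely often --- so after replacing $P$ by a point far enough along its orbit the argument applies directly, with no need for dynamical Mordell--Lang. For a general $f$, however, $1$-stability can genuinely fail, the exceptional curves accumulate under iteration, and the bad locus $Z$ is forced to be a curve, which the orbit can meet along a set of positive density; this is the precise manifestation of the non-functoriality of heights under rational maps. Here dynamical Mordell--Lang for $(S,f)$ says $\{n:f^{n}(P)\in Z\}$ is a finite union of arithmetic progressions, and since $\OO_f(P)$ is Zariski dense it is not contained in $Z$, so this set is not cofinite; choosing a residue $c$ modulo $M$ all of whose members are ``good'' times and along which the sub-orbit is moreover Zariski dense (possible because a Zariski dense orbit cannot be covered by finitely many curves, one for each residue class), and putting $g=f^{M}$, $Q=f^{c}(P)$, the whole forward orbit $\OO_g(Q)$ avoids $Z$, the clean argument applies to $(g,Q)$, and a final estimate controlling $h_{\theta^{+}}$ over the bounded stretches of the orbit between consecutive good times transfers the resulting growth back to $\ua_f(P)=\delta_1(f)$. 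The same bookkeeping --- tracking where the canonical height is positive --- simultaneously produces $\qb$-points with Zariski dense orbit, which is the assertion of Theorem \ref{dt2}.
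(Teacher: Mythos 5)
First, a mismatch: the labelled statement is the Kawaguchi--Silverman \emph{conjecture} itself, which neither the paper nor your proposal proves in general. What you are actually arguing is the special case treated in Theorems~\ref{dt}, \ref{aff} and~\ref{dt2} --- part~(d) for a surface $S$ with $\delta_1(f)>\delta_2(f)$, conditional on dynamical Mordell--Lang --- and I will review it as such. Your global strategy matches the paper's: since $\oa_f(P)\le\delta_1(f)$ is known (\cite{Ma20}), prove $\ua_f(P)\ge\delta_1(f)$; use DML and Zariski density to move the orbit off a bad closed subset (this is exactly Lemma~\ref{obs}, and in fact the exceptional set of times is \emph{finite}, not merely noncofinite, since an infinite arithmetic progression in $Z$ would make a sub-orbit non-dense); finish with a Northcott contradiction if heights stay bounded along a dense orbit.

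The mechanism you use to produce growth, however, has a real gap. You want a fixed birational model on which $f$ acts algebraically $1$-stably, a nonzero nef eigenclass $\theta^{+}$ with $\tilde f^{\ast}\theta^{+}=\delta_1(f)\theta^{+}$ by Perron--Frobenius, bigness of $\theta^{+}$ by a Hodge-index computation, and a one-sided canonical height $\hat h^{+}$. For a general dominant (non-birational) rational self-map of a surface, no iterate need admit a $1$-stable model --- Favre's monomial examples show this --- so $(\tilde f^{n})^{\ast}\neq(\tilde f^{\ast})^{n}$ on any fixed $N^1(X)_{\RR}$ and the Perron--Frobenius step has no operator to apply to. You notice that $1$-stability can fail, but the remedy you offer (DML to skip bad times) is orthogonal: DML constrains where the \emph{orbit} lands, not whether the eigenclass exists. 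The bigness claim is also unsupported outside the automorphism/birational setting, since the push-pull identity behind the self-intersection computation again presupposes stability. The paper avoids all of this. Lemma~\ref{hin} takes, for each large $n$, a resolution of $f^{n}\circ f^{n}$ and applies Siu's numerical criterion to the single divisor $D_n=\phi_n^{\ast}H/\delta_2^n+\psi_n^{\ast}H-(\varepsilon^n\deg_1(f^n)/\delta_2^n)\pi_n^{\ast}H$, giving a \emph{three-term} height inequality $h(f^{2n}R)/\delta_2^n+h(R)\ge (\varepsilon^n\deg_1(f^n)/\delta_2^n)h(f^nR)-C_n$ off a proper closed set; Lemma~\ref{tec} turns this, by an elementary linear-recurrence argument ($\ell_{n+1}\ge\alpha\ell_n$) plus Northcott, into $\ua_{f^n}(Q)\ge\varepsilon^n\deg_1(f^n)/2$, and Lemma~\ref{est} and the limits $n\to\infty$, $\varepsilon\to1$ give $\ua_f(P)\ge\delta_1(f)$. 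No eigenclass or canonical height appears; the growth comes from a quantitative degree inequality at each scale $n$, which is exactly what survives when algebraic stability does not.
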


In general, we have 
\[ 1 \leq \underline{\alpha}_f(P) \leq \overline{\alpha}_f(P) \leq \delta_1(f), 
\]
where the first two inequalities are obvious, while the last one is non-trivial and is due to \cite[Theorem 1.4]{Ma20}. In particular, if $\delta_1(f) = 1$, then $\alpha_f(P) = 1 = \delta_1(f)$ for all $P \in X(\qb)_f$. Therefore, we are only interested in the case where $\delta_1(f) > 1$. Moreover, in order to show the existence of $\alpha_f(P)$ and the equality $\alpha_f(P) = \delta_1(f)$, it is equivalent to show the inequality $\underline{\alpha}_f(P) \geq \delta_1(f)$.

When $f: X \to X$ is a surjective endomorphism, the first three claims (a), (b) and (c) were shown in \cite{KS16b}, while the last one (d) is still widely open. We refer to \cite{CLO22, MSS18, MMSZ20} and references therein for known results.

When $f: X \dashrightarrow X$ is an arbitrary dominant rational self-map, much less is known. In fact, 
there is a counterexample to Conjecture \ref{ksc}(c) due to \cite{LS20}. Furthermore, the results from \cite{BDJ20, BDJK21} suggest that claim (b) may also be false. Let us list some previous works towards claim (d). When $f$ is a birational self-map of an irrational smooth projective surface, one can reduce the problem to the induced automorphism of the minimal model (\cite[Theorem 3.6]{MSS18}). While for a birational self-map of a projective space, the question is much sophisticated; Conjecture \ref{ksc}(d) is still open even in dimension two. The main obstacle is the deficiency of functoriality of height functions (\cite[Theorem B.3.2.(b)]{HS00}) for an arbitrary rational map. Known results include regular affine space automorphisms (\cite{Ka06, Ka13, Le13}, see also \cite[Theorem 2(b)]{KS14}) and monomial maps (\cite{Si14}, see also \cite[Theorem 2(d)]{KS14} and \cite{Li19}).

We collect some basic facts here about arithmetic degrees for later use.

\begin{lemma}\label{ks13} Let $f : X \dashrightarrow X$ be a dominant rational self-map of a smooth projective variety $X$. Then for all $P\in X(\qb)_f$ and all $k \geq 0$, 
\[ \underline{\alpha}_f(f^k(P)) = \underline{\alpha}_f(P) \ \ \text{and} \ \ \overline{\alpha}_f(f^k(P)) = \overline{\alpha}_f(P). 
\]
\end{lemma}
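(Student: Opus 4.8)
The plan is to reduce the statement to the elementary fact that two logarithmic Weil heights associated with ample divisors are comparable up to bounded error, and that composing with $f^k$ changes heights by a bounded amount \emph{along the orbit} of a point in $X(\qb)_f$. Concretely, fix an ample divisor $H$ and the associated height $h_H$, and write $h^{+}_H = \max\{h_H, 1\}$. The key observation is that for $P \in X(\qb)_f$, the point $f^k(P)$ also lies in $X(\qb)_f$, and for every $i \geq 0$ we have $f^i(f^k(P)) = f^{i+k}(P)$, so the orbit of $f^k(P)$ is literally the tail of the orbit of $P$. Thus the sequences $\big(h^{+}_H(f^i(f^k(P)))\big)_{i \geq 0}$ and $\big(h^{+}_H(f^i(P))\big)_{i \geq 0}$ agree after an index shift by $k$.

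First I would make precise the effect of the index shift on the $i$-th root limits. For a sequence of positive reals $a_i$, one has $\liminf_{i} a_{i+k}^{1/i} = \liminf_i a_i^{1/i}$ and similarly for $\limsup$: writing $a_{i+k}^{1/i} = \big(a_{i+k}^{1/(i+k)}\big)^{(i+k)/i}$ and noting $(i+k)/i \to 1$, together with the a priori bound $1 \leq \underline{\alpha}_f(P) \leq \overline{\alpha}_f(P) \leq \delta_1(f) < \infty$ recalled in the excerpt (so the sequence $a_i^{1/i}$ is bounded), one sees the exponent perturbation is harmless. Hence, applying this to $a_i = h^{+}_H(f^i(P))$ and using $a_{i+k} = h^{+}_H(f^i(f^k(P)))$, we get
\[
\underline{\alpha}_f(f^k(P)) = \liminf_{i \to +\infty} h^{+}_H(f^i(f^k(P)))^{1/i} = \liminf_{i \to +\infty} h^{+}_H(f^{i+k}(P))^{1/i} = \underline{\alpha}_f(P),
\]
and identically with $\limsup$ in place of $\liminf$, giving $\overline{\alpha}_f(f^k(P)) = \overline{\alpha}_f(P)$.

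There is essentially no main obstacle here: the statement is a soft consequence of the orbit-tail identity plus the boundedness of the root sequence. The only point requiring a word of care is that $f^k(P)$ indeed belongs to $X(\qb)_f$, so that $\underline{\alpha}_f(f^k(P))$ is defined — but this is immediate from the definition of $X(\qb)_f$, since the forward orbit of $f^k(P)$ is contained in that of $P$ and therefore avoids $I(f)$. One should also note that the independence of $\underline{\alpha}_f$ and $\overline{\alpha}_f$ from the choice of ample $H$ (Proposition 12 of \cite{KS16a}, recalled above) means we are free to use a single fixed $h_H$ throughout, so no comparison of different height functions is even needed. I would therefore present the argument in a few lines, emphasizing the index-shift lemma for $i$-th roots as the one nontrivial ingredient.
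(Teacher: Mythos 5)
Your proof is correct and is essentially what the paper has in mind: the paper dismisses the lemma as "clear by definition" with a pointer to \cite[Lemma 13]{KS16a}, and your expansion of that — the orbit-tail identity $f^i(f^k(P)) = f^{i+k}(P)$ together with the index-shift/root-exponent lemma for sequences bounded in $[1,\delta_1(f)]$ — is the standard way to make "clear by definition" precise.
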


\begin{proof} This is clear by definition (see e.g., \cite[Lemma 13]{KS16a}). 
\end{proof}

\begin{lemma}\label{est} Let $f : X \dashrightarrow X$ be a dominant rational self-map of a smooth projective variety $X$. Let $P\in X(\qb)_f$ and $n\geq 1$. Then 
\begin{equation*}
\underline{\alpha}_{f^n}(P) = \underline{\alpha}_{f}(P)^n \ \ \text{and} \ \ \overline{\alpha}_{f^n}(P) = \overline{\alpha}_{f}(P)^n. 
\end{equation*} 
\end{lemma}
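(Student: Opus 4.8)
The plan is to compare the growth rates of the height sequences for $f$ and $f^n$ directly from the definitions. Fix an ample divisor $H$ and write $h^+ = h^+_H$. For $P \in X(\qb)_f$ (note $X(\qb)_f \subset X(\qb)_{f^n}$, so the statement makes sense) and a fixed residue $r \in \{0, 1, \dots, n-1\}$, consider the subsequence $\big(h^+(f^{in+r}(P))\big)_{i \geq 0}$, which by Lemma \ref{ks13} applied to $f^r(P)$ is, up to shifting the index, exactly the height sequence of the point $f^r(P) \in X(\qb)_{f^n}$ under the map $f^n$. Thus
\[ \liminf_{i \to +\infty} h^+(f^{in+r}(P))^{1/(in)} = \underline{\alpha}_{f^n}(f^r(P))^{1/n} = \underline{\alpha}_{f^n}(P)^{1/n}, \]
where the last equality uses Lemma \ref{ks13} for $f^n$. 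The analogous identity holds with $\limsup$ and $\overline{\alpha}$.

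Next I would assemble the full sequence from these $n$ subsequences. Writing any index $m$ as $m = in + r$ with $0 \leq r < n$, we have $h^+(f^m(P))^{1/m} = \Big(h^+(f^{in+r}(P))^{1/(in)}\Big)^{in/m}$, and as $m \to +\infty$ the exponent $in/m \to 1$. Hence the $\liminf$ (resp. $\limsup$) of $h^+(f^m(P))^{1/m}$ over all $m$ equals the minimum (resp. maximum) over $r$ of the corresponding subsequential $\liminf$ (resp. $\limsup$). But by the previous paragraph every one of these subsequential limits equals $\underline{\alpha}_{f^n}(P)^{1/n}$ (resp. $\overline{\alpha}_{f^n}(P)^{1/n}$), independent of $r$. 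Therefore
\[ \underline{\alpha}_f(P) = \underline{\alpha}_{f^n}(P)^{1/n} \quad \text{and} \quad \overline{\alpha}_f(P) = \overline{\alpha}_{f^n}(P)^{1/n}, \]
which rearranges to the claimed equalities.

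The main technical point to be careful about — really the only obstacle — is the interchange between "$\liminf$ over all $m$" and "minimum over $r$ of $\liminf$ over the residue-$r$ subsequence," and likewise for $\limsup$. This is a standard fact about splitting a sequence into finitely many subsequences that exhaust the index set, but it genuinely uses the boundedness $1 \leq h^+(f^m(P))^{1/m} \leq \delta_1(f) + o(1)$ (from the inequality $\overline{\alpha}_f(P) \leq \delta_1(f)$ recalled in the excerpt, or more elementarily from a direct height estimate) so that the factor $(\cdot)^{in/m} \to 1$ does not distort the limit; one should spell out the two-sided squeeze rather than wave at it. Everything else is bookkeeping with Lemma \ref{ks13}.
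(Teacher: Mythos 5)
Your proposal has a genuine gap at the central step: the equality $\underline{\alpha}_{f^n}(f^r(P)) = \underline{\alpha}_{f^n}(P)$ for $0 < r < n$, which you attribute to ``Lemma \ref{ks13} for $f^n$.'' Lemma \ref{ks13} applied to the map $g = f^n$ only gives $\underline{\alpha}_g(g^k(Q)) = \underline{\alpha}_g(Q)$, i.e.\ $\underline{\alpha}_{f^n}(f^{nk}(P)) = \underline{\alpha}_{f^n}(P)$ for $k \geq 0$. The point $f^r(P)$ with $0 < r < n$ is not of the form $(f^n)^k(P)$, so the lemma does not apply. And this is not a cosmetic issue: the two liminfs in question,
\[ \liminf_{i\to\infty} h^+\!\big(f^{in+r}(P)\big)^{1/i} \quad\text{and}\quad \liminf_{i\to\infty} h^+\!\big(f^{in}(P)\big)^{1/i}, \]
are taken over disjoint subsequences of the orbit heights, so unlike the shift $P \mapsto f^{nk}(P)$ (where one sequence is a tail of the other), there is no a priori relation between them. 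To compare them one needs a height inequality between consecutive iterates, namely \cite[Lemma 3.3]{Ma20}: there is $C_0 \geq 1$ (independent of the point) with $h^+(f(Q)) \leq C_0\, h^+(Q)$. This is exactly the nontrivial engine of the paper's proof, which writes $j = qn + r$ and uses $h^+(f^{qn+n}(P)) \leq C_0^{\,n-r} h^+(f^{j}(P))$ to deduce the hard inequality $\underline{\alpha}_f(P) \geq \underline{\alpha}_{f^n}(P)^{1/n}$.

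To be clear about what does work in your write-up: the easy direction $\underline{\alpha}_{f^n}(P) \geq \underline{\alpha}_f(P)^n$ is immediate (the arithmetic-progression liminf is a subsequential liminf, hence $\geq$), which matches the paper; and your assembly step (liminf over all $m$ equals the minimum of the residue-class liminfs, with the exponent correction $in/m \to 1$ harmless for a sequence bounded in $[1, \delta_1(f)+o(1)]$) is mechanically sound. But for the reverse inequality you still need each residue class to satisfy $\liminf_i h^+(f^{in+r}(P))^{1/(in)} \geq \underline{\alpha}_{f^n}(P)^{1/n}$, which is precisely $\underline{\alpha}_{f^n}(f^r(P)) \geq \underline{\alpha}_{f^n}(P)$ --- and that is the content that Lemma \ref{ks13} does not supply. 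Once you import \cite[Lemma 3.3]{Ma20} to supply it, you have essentially reproduced the paper's argument rather than found an alternative to it.
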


The above equalities are obvious if we know the existence of $\alpha_f(P)$, for instance, when $f$ is a morphism.

\begin{proof} These two equalities are stated in \cite[3.9(iii)]{JSXZ21}. The second one is exactly \cite[Corollary 3.4]{Ma20}; see also the proof of \cite[Proposition 12]{Si14}. Here we only show the first one by an analogous argument. By definition, 
\begin{equation*}
\begin{aligned}
\underline{\alpha}_{f^n}(P)^{1/n} = \liminf_{i\to +\infty} h^{+}((f^n)^i(P))^{1/ni} 
\geq \liminf_{j\to +\infty} h^{+}(f^{j}(P))^{1/j} = \underline{\alpha}_{f}(P).  
\end{aligned}
\end{equation*} 
This proves $\underline{\alpha}_{f^n}(P) \geq \underline{\alpha}_{f}(P)^n$. Write $j = qn+r$, where $q \geq 0$ and $n-1 \geq r \geq 0$. Then by \cite[Lemma 3.3]{Ma20}, 
\[ h^{+}(f^{qn+n}(P)) \leq C_0^{n-r} h^{+}(f^{j}(P)) \leq C_0^{n} h^{+}(f^{j}(P)),  
\]
where the constant $C_0 \geq 1$ is independent of the choice of $P$, $n$ and $j$. So 
\begin{equation*}
\begin{aligned}
&h^{+}(f^{qn+n}(P))^{1/(qn+n)} 
\leq C_0^{1/(q+1)} h^{+}(f^{j}(P))^{1/j}. 
\end{aligned}
\end{equation*}
It then follows that 
\begin{equation*}
\begin{aligned}
\underline{\alpha}_{f}(P) &= \liminf_{q\to +\infty} \min_{0\leq r \leq n-1} h^{+}(f^{qn+r}(P))^{1/(qn+r)} \\ 
&\geq \liminf_{q\to +\infty} \min_{0\leq r \leq n-1} C_0^{-1/(q+1)}\cdot h^{+}(f^{qn+n}(P))^{1/(qn+n)} \\ 
&= \liminf_{q\to +\infty} h^{+}(f^{qn+n}(P))^{1/(qn+n)} = \underline{\alpha}_{f^n}(P)^{1/n}. 
\end{aligned}
\end{equation*} 
This proves  $\underline{\alpha}_{f}(P)^n \geq \underline{\alpha}_{f^n}(P)$. 
\end{proof}

The condition of Zariski dense orbit in Conjecture \ref{ksc}(d) is related with the dynamical Mordell--Lang conjecture (Conjecture \ref{dml}). This is indicated by the following easy observation.

\begin{lemma}[{\textit{cf.} \cite[Corollary 1.4]{BGT10}}]\label{obs} Let $f : X \dashrightarrow X$ be a dominant rational self-map of a smooth projective variety $X$ and $P\in X(\qb)_f$ with Zariski dense orbit $\OO_f(P)$ in $X$. Let $U \subset X$ be a non-empty Zariski open subset. Assume Conjecture \ref{dml} holds for $(X, f)$. Then the set 
\[ \{ n \in \ZZ_{>0} : f^n(P) \notin U \} 
\]
is finite. 
\end{lemma}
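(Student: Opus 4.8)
The plan is to deduce the finiteness directly from the dynamical Mordell--Lang conjecture, applied not to $U$ itself but to the irreducible components of its complement. Put $Y := X \setminus U$, a proper Zariski closed subset of $X$, and write $Y = Y_1 \cup \cdots \cup Y_r$ with each $Y_j$ an irreducible closed subvariety. Since $P \in X(\qb)_f$, every iterate $f^n(P)$ with $n \geq 0$ is a well-defined point of $X$, and $f^n(P) \notin U$ holds precisely when $f^n(P) \in Y_j$ for some $j$. Hence
\[ \{ n \in \ZZ_{>0} : f^n(P) \notin U \} \;=\; \bigcup_{j=1}^{r} \{ n \in \ZZ_{>0} : f^n(P) \in Y_j \}, \]
and by Conjecture \ref{dml} applied to $(X, f)$ and to each $Y_j$, the right-hand side is a finite union of arithmetic progressions. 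It remains to rule out the presence of an infinite one.

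So suppose for contradiction that this set is infinite; we may assume $\dim X \geq 1$, since otherwise $U = X$ and the set is empty. Then at least one of the arithmetic progressions above is infinite, so there exist integers $a \geq 0$, $b \geq 1$ and an index $j$ with $f^{a+bk}(P) \in Y_j$ for all $k \geq 0$. Writing $Q := f^a(P)$ and $g := f^b$, this says $\OO_g(Q) \subseteq Y_j$, so the Zariski closure of $\OO_g(Q)$ is contained in $Y_j \subsetneq X$.

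To produce the contradiction I would establish the claim: \emph{if $\OO_f(P)$ is Zariski dense in $X$, then $\OO_{f^b}(f^a(P))$ is Zariski dense in $X$ for every $a \geq 0$ and $b \geq 1$.} First, since $X$ is irreducible of positive dimension, removing the finitely many points $P, f(P), \ldots, f^{a-1}(P)$ does not change the Zariski closure, so $\OO_f(f^a(P))$ is still dense and it suffices to treat $a = 0$. From the decomposition $\OO_f(P) = \bigcup_{i=0}^{b-1} \OO_{f^b}(f^i(P))$ and irreducibility of $X$, some $\OO_{f^b}(f^{i_0}(P))$ with $0 \leq i_0 \leq b-1$ is already Zariski dense. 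Finally density propagates forward under $f$: the dense set $\OO_{f^b}(f^i(P))$ lies in $X \setminus I(f)$, on which $f$ restricts to a dominant morphism that commutes with $f^b$ at these points, so $f(\OO_{f^b}(f^i(P))) = \OO_{f^b}(f^{i+1}(P))$ is again dense; iterating from $i = i_0$ until the exponent is a multiple of $b$ shows $\OO_{f^b}(f^{mb}(P))$ is dense for some $m \geq 0$, whence so is the larger set $\OO_{f^b}(P)$. Applying the claim with the $a$ and $b$ of the previous paragraph contradicts the inclusion of the closure of $\OO_g(Q)$ in $Y_j \subsetneq X$, and the lemma follows.

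The only substantial input is the invocation of Conjecture \ref{dml}; everything else is bookkeeping. The single delicate point I expect to watch is that $f$ is merely rational, so that ``applying $f$ to an orbit'' and ``$f$ commuting with $f^b$'' are legitimate only along points outside $I(f)$ --- which is exactly what the hypothesis $P \in X(\qb)_f$ guarantees.
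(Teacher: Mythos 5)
Your argument is correct and follows the same route as the paper: decompose $X \setminus U$ into irreducible components, invoke Conjecture~\ref{dml} to write each return set as a finite union of arithmetic progressions, and derive a contradiction with density from any infinite progression. The paper dispatches that last step with a one-line ``this is absurd because $\OO_f(P)$ is Zariski dense,'' whereas you spell out the (genuinely needed) justification that density of $\OO_f(P)$ forces density of $\OO_{f^b}(f^a(P))$, via irreducibility and forward propagation of density under the dominant map $f$ restricted to $X\setminus I(f)$.
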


\begin{proof} The complement $X \setminus U$ is a finite union of (irreducible) closed subvarieties of $X$. We claim that for every subvariety $Y \subsetneq X$, the set 
\[ \{ n \in \ZZ_{>0} : f^n(P) \in Y \} \]
is finite. Otherwise, $(f^a)^k(f^b(P)) = f^{ak+b}(P)\in Y$ for some $a, b \in \ZZ_{>0}$ and all $k\geq 1$. This is absurd because $\OO_f(P)$ is Zariski dense in $X$. 
\end{proof}

We state the following two cases for which Conjecture \ref{dml} holds, and refer to \cite{BGT16, BHS20}  and references therein for other known results. 

\begin{theorem}[{\cite[Theorem 1.3]{BGT10}}]\label{bgt1.3} Conjecture \ref{dml} holds for all \'{e}tale morphisms $f: X \to X$ with $X$ quasi-projective. 
\end{theorem}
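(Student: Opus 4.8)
\textbf{Proof proposal for Theorem \ref{bgt1.3}.}

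The plan is to reduce the statement about an arbitrary \'{e}tale self-morphism of a quasi-projective variety to the classical $p$-adic case, where one can invoke the $p$-adic arc lemma of Bell--Ghioca--Tucker. First I would spread out the data: choosing a model of $X$ and $f$ over a finitely generated $\ZZ$-subalgebra of $\qb$, one finds a prime number $p$ and an embedding $\qb \hookrightarrow \CC_p$ (equivalently, a place of the number field generated by the coordinates of $P$ and the coefficients of $f$) such that $X$ admits a smooth model $\mathcal{X}$ over the ring of integers $\OO$ of a $p$-adic field, $f$ extends to an \'{e}tale $\OO$-morphism $\mathcal{F}: \mathcal{X} \to \mathcal{X}$, the point $P$ extends to an $\OO$-point, and $Y$ extends to a closed subscheme $\mathcal{Y} \subset \mathcal{X}$. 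Here one must be slightly careful that $P \in X(\qb)_f$ forces no indeterminacy issues, since $f$ is a genuine morphism; the only work is arranging good reduction simultaneously for $X$, $f$, $P$, and $Y$ at a suitable $p$, which is standard and costs us only finitely many excluded primes.

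Next I would localize at the reduction $\bar{P}$ of $P$ in the special fibre $\mathcal{X}_{\bar{\mathbb{F}}_p}$. Because $\mathcal{F}$ is \'{e}tale, some iterate $f^{m_0}$ fixes the residue point: the orbit of $\bar{P}$ under the finite map on the finite special fibre is eventually periodic, say of period $m$, so replacing $f$ by $g := f^m$ and $P$ by $f^j(P)$ for an appropriate $j$, we may assume $\bar{P}$ is a fixed point of $\bar{g}$. The residue disc $\Delta \subset \mathcal{X}(\OO)$ around $\bar P$ is then $g$-invariant, and identifying $\Delta$ with an open polydisc in $\OO^{\dim X}$ via a local \'{e}tale coordinate system, the map $g|_\Delta$ becomes an analytic self-map of the polydisc fixing the origin. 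The \'{e}taleness of $\mathcal{F}$ guarantees that the Jacobian of $g|_\Delta$ at the fixed point lies in $\mathrm{GL}(\OO)$, which is precisely the hypothesis needed to apply the Bell--Ghioca--Tucker $p$-adic analytic uniformization: there is a $p$-adic analytic map $\theta: \ZZ_p \to \Delta$ with $\theta(n) = g^n(P)$ for all $n \in \ZZ_{\geq 0}$ (after possibly shrinking and passing to a further iterate to make $g$ sufficiently close to the identity, i.e.\ analytically ``linearizable'' along the orbit).

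Finally I would conclude by an analytic zero-set argument. The condition $g^n(P) \in \mathcal{Y}$ becomes $n \in \{ t \in \ZZ_p : G(\theta(t)) = 0 \text{ for all } G \text{ in the ideal of } \mathcal{Y}\}$, i.e.\ the intersection of $\ZZ_p$ with the common zero locus of finitely many $p$-adic analytic functions on $\ZZ_p$. A nonzero $p$-adic analytic function on $\ZZ_p$ (equivalently, a nonzero entry of a Mahler/Strassmann series) has only finitely many zeros by the $p$-adic Weierstrass preparation (Strassmann's theorem), so each such zero set is either all of $\ZZ_p$ or finite; intersecting with $\ZZ_{>0}$ and unwinding the two reductions (replacing $f$ by $g = f^m$ and discarding the finitely many initial terms $f^0(P), \dots, f^{j-1}(P)$) turns ``finite or cofinite in an arithmetic progression'' into a finite union of arithmetic progressions, as desired.

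The main obstacle, and the place where \'{e}taleness is genuinely used rather than merely convenient, is guaranteeing the linearizability/analytic-parametrization step: in general an analytic self-map of a $p$-adic polydisc fixing the origin need not admit an orbit-interpolating analytic map $\ZZ_p \to \Delta$, and the standard fix is to pass to a high iterate so that the map is congruent to the identity modulo a large power of $p$ (contracting enough on a small polydisc). Controlling that this can be done uniformly --- that the required iterate and polydisc radius depend only on $p$ and $\dim X$ and not on $P$ --- and checking that the Jacobian condition coming from \'{e}taleness survives iteration, is the technical heart of the argument; everything before it (spreading out, eventual periodicity of the residue point) and after it (Strassmann's theorem) is routine.
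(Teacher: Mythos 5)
The paper does not prove this statement; it is quoted directly from Bell--Ghioca--Tucker \cite{BGT10} and invoked as a black box, so there is no internal proof to compare against. Your sketch is a faithful outline of the actual BGT argument: spread out the data over a finitely generated $\ZZ$-algebra, choose a place of good reduction, use finiteness of the special fibre's $\mathbb{F}_q$-points to pass to an iterate $g = f^m$ fixing the residue disc of (a shift of) $P$, use \'{e}taleness to ensure the Jacobian of $g$ at the fixed residue point is a unit in $\mathcal{O}$, then (after a further iterate to bring $g$ $p$-adically close to the identity) interpolate the orbit by an analytic map $\theta : \ZZ_p \to \mathcal{X}(\mathcal{O})$ and apply Strassmann's theorem. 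One small comment: the ``uniformity'' concern you raise at the end --- that the required iterate and radius be independent of $P$ --- is not needed for Conjecture \ref{dml} as stated, since $P$ and $Y$ are fixed; it only becomes relevant for uniform variants of the dynamical Mordell--Lang problem. Also, you should say the orbit of $\bar P$ lands in $\mathcal{X}(\mathbb{F}_q)$ for some finite field $\mathbb{F}_q$ (a finite set), rather than speaking of a ``finite special fibre,'' but the intent is clear.
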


\begin{theorem}[{\cite[Theorem 1]{Xi17}}]\label{xia} Conjecture \ref{dml} holds for all polynomial endomorphisms $f : \bba^2 \to \bba^2$. 
\end{theorem}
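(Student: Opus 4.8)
The plan is to combine the $p$-adic interpolation method of Bell--Ghioca--Tucker with a close analysis of the dynamics of $f$ at infinity via the theory of the valuative tree of Favre and Jonsson. First I would dispose of the degenerate cases. If $f$ is not dominant, then $\overline{f(\bba^2)}$ is a curve or a point, and after replacing $P$ by $f(P)$ one is reduced to a Mordell--Lang problem for a rational self-map of a curve, which is classical; so assume $f$ dominant. Given $P$, set $Z = \overline{\OO_f(P)}$, an $f$-invariant closed subset. If $\dim Z \leq 1$, then either $P$ is preperiodic (and the set in question is visibly eventually periodic) or, after passing to an iterate fixing an irreducible component of $Z$ and to a tail of the orbit, one lands again in the one-dimensional case on a curve. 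Hence we may assume $Z = \bba^2$, i.e. $\OO_f(P)$ is Zariski dense, and it suffices to prove that for every curve $C \subset \bba^2$ the set $\{n : f^n(P) \in C\}$ is finite --- an infinite arithmetic progression inside this set would force an iterate of $P$ to have orbit closure contained in $C$, contradicting density.

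\emph{The $p$-adic workhorse.} After spreading out over a finitely generated $\ZZ$-subalgebra of $\qb$ and choosing a suitable prime $p$, I fix an embedding $\qb \hookrightarrow \CC_p$ such that $f$, $P$ and $C$ are defined over the ring of integers of a $p$-adic field and have good reduction. The goal is to show that, after replacing $f$ by an iterate $f^N$ and $P$ by $f^k(P)$, the orbit map $n \mapsto (f^N)^n(f^k(P))$ extends to a $p$-adic analytic map $\ZZ_p \to \bba^2(\CC_p)$; then membership in $C$ becomes the vanishing of a convergent power series in $n$, which by Strassmann's theorem is either finite or identically zero, the latter being excluded by Zariski density. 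For this one needs a $p$-adic polydisc, stable under a high iterate of $f$, on which that iterate is congruent to the identity modulo the maximal ideal. The two obstructions are that $f$ is in general ramified, hence need not be \'{e}tale along the orbit, and that a point with Zariski dense orbit typically has its orbit accumulating at infinity, where no such fixed polydisc is available.

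\emph{Dynamics at infinity.} To control the orbit near infinity I would pass to a good compactification: by the theory of dynamical compactifications of $\bba^2$ due to Favre and Jonsson, there is a smooth projective surface $X \supset \bba^2$ on which $f$ extends to an algebraically stable rational map, and the induced action on the valuative tree at infinity $\mathcal{V}_{\infty}$ has a distinguished eigenvaluation $v_{\ast}$. The argument then splits according to whether $\delta_1(f)^2 > \delta_2(f)$ or $\delta_1(f)^2 = \delta_2(f)$, the two possibilities permitted by the concavity inequality $\delta_1(f)^2 \geq \delta_2(f)$. In the first, H\'{e}non-like case, one arranges a compactification in which the boundary point determined by $v_{\ast}$ is attracting; then the orbit either stays in a fixed affine chart, where the interpolation runs after dealing with ramification, or converges $p$-adically to an attracting (possibly superattracting) fixed point at infinity, around which $f$ admits an analytic normal form that can be interpolated. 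In the second, more rigid case --- which contains the degenerate sub-case $\delta_1(f) = 1$, forcing $f$ to be birational with bounded degree growth --- the map preserves, or becomes after iteration tightly controlled by, a pencil of curves, and one reduces to the Mordell--Lang problem in a one-parameter family, i.e. to the already-settled cases over curves and over semiabelian varieties.

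\emph{Main obstacle.} The crux is the interaction between ramification and the dynamics at infinity. One cannot simply discard the ramification curve $R_f$, since proving that the orbit meets $R_f$ only finitely often is itself a Mordell--Lang assertion; instead one must push the $p$-adic interpolation through ramified local models, or exploit the type of the eigenvaluation $v_{\ast}$ (divisorial, irrational, or infinitely singular) to trap the tail of the orbit in a region where a clean analytic normal form exists. Producing, uniformly across all these configurations, a $p$-adic neighborhood of the boundary into which the orbit eventually falls, together with an interpolating analytic model there, is where essentially all of the technical work lies.
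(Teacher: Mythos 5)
This statement is not proved in the paper at hand; it is quoted verbatim as \cite[Theorem 1]{Xi17}, i.e., as an external black box (Xie's Ast\'erisque memoir), so there is no in-paper proof for you to match. That said, your sketch does correctly identify the two pillars of Xie's actual argument --- Bell--Ghioca--Tucker $p$-adic interpolation and Favre--Jonsson's valuative tree at infinity, with a dichotomy governed by the comparison of $\delta_1(f)^2$ against $\delta_2(f)$ --- so as a description of the strategy of the cited source it is accurate.

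However, you should be candid that what you have written is a research plan, not a proof. Your final paragraph concedes that ``essentially all of the technical work lies'' in precisely the steps you have not carried out: pushing the interpolation through ramified local models, producing a uniform $p$-adic neighborhood of the boundary trapping the tail of the orbit, and handling all three types of eigenvaluation (divisorial, irrational, infinitely singular). Each of these is a substantial chapter of Xie's memoir, and the reduction to ``already-settled cases over curves and semiabelian varieties'' in the $\delta_1^2 = \delta_2$ branch is itself nontrivial (it is not a direct quotation of the classical Mordell--Lang theorem but requires constructing and controlling the invariant fibration). If this were submitted as a self-contained proof, the gap is not a single missing lemma but the entire quantitative core of the argument; for the purposes of the present paper, the honest move is exactly what the author does, namely cite \cite[Theorem 1]{Xi17} and move on.
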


We end this section with the following two facts which will be used later. 

\begin{lemma}[{\cite[Theorem 4.3 (i) and (ii)]{Xi14}}]\label{xi4.3} Let $S$ be a smooth projective surface and $f : S \dashrightarrow S$ a dominant rational map. 

\medskip $(1)$ For any positive integer $m \geq 1$, Conjecture \ref{dml} holds for $(S, f)$ if and only if it holds for $(S, f^m)$. 

\medskip $(2)$ Let $U \subset S$ be a non-empty open subset of $S$ such that the restriction $f|_U : U \to U$ is a morphism.  Then Conjecture \ref{dml} holds for $(S, f)$ if and only if it holds for $(U, f|_U)$. 
\end{lemma}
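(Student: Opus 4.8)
The plan is to establish both equivalences by the same recipe: identify the orbit under one map with an orbit under the other, and note that the operations relating the two index sets — passing to a fixed residue class, multiplying the index by $m$, or translating — all carry a finite union of arithmetic progressions to a finite union of arithmetic progressions.

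For part $(2)$ the structural fact I would record first is that, since $f|_U$ maps $U$ into $U$, any forward $f$-orbit that ever enters $U$ stays in $U$: if $f^n(P)\in U$ then $f^{n+1}(P)=f|_U(f^n(P))\in U$. Given $P\in S(\qb)_f$ and a closed $Y\subseteq S$, I then split into two cases. If $\OO_f(P)$ eventually enters $U$, say $f^{n_0}(P)\in U$, set $Q:=f^{n_0}(P)\in U(\qb)=U(\qb)_{f|_U}$; then $\{n\ge 0:f^n(P)\notin U\}\subseteq\{0,\dots,n_0-1\}$, the set $Y\cap U$ is a closed subvariety of $U$, and for $k\ge 0$ one has $f^{n_0+k}(P)\in Y\iff (f|_U)^k(Q)\in Y\cap U$, so $\{n\ge 0:f^n(P)\in Y\}$ agrees off a finite set with $n_0+\{k\ge 0:(f|_U)^k(Q)\in Y\cap U\}$, a finite union of arithmetic progressions by the hypothesis. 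If instead $\OO_f(P)$ never meets $U$, then $\OO_f(P)$ and its Zariski closure $Z'$ lie in the proper closed set $S\setminus U$, so $\dim Z'\le 1$; since $f$ restricts to a dominant rational self-map of $Z'$, one concludes by the dynamical Mordell--Lang property for curves (classical: after normalizing and passing to the smooth projective model, a point is either preperiodic, giving an eventually periodic orbit, or has pairwise distinct iterates), or trivially when $\dim Z'=0$. The converse runs the same computation backwards, using that $f$ is a morphism on $U$, so $I(f)\cap U=\varnothing$ and $U(\qb)\subseteq S(\qb)_f$, and that every closed subvariety of $U$ is the trace on $U$ of one on $S$.

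For part $(1)$ I would use the always-valid inclusion $S(\qb)_f\subseteq S(\qb)_{f^m}$, along which $(f^m)^i(P)=f^{mi}(P)$. Assuming Conjecture \ref{dml} for $(S,f^m)$: given $P\in S(\qb)_f$ and $Y$, each $f^r(P)$ with $0\le r<m$ again lies in $S(\qb)_f\subseteq S(\qb)_{f^m}$, and
\[ \{n\ge 0:f^n(P)\in Y\}=\bigcup_{r=0}^{m-1}\bigl(r+m\cdot\{k\ge 0:(f^m)^k(f^r(P))\in Y\}\bigr), \]
a finite union of arithmetic progressions. Assuming Conjecture \ref{dml} for $(S,f)$: for $P\in S(\qb)_f$ one has $\{n\ge 0:(f^m)^n(P)\in Y\}=\{n\ge 0:mn\in A\}$ with $A=\{j\ge 0:f^j(P)\in Y\}$, and the preimage of an arithmetic progression under $n\mapsto mn$ is again a finite union of arithmetic progressions (or empty). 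The one delicate point is that $S(\qb)_{f^m}$ can be strictly larger than $S(\qb)_f$, since a point of $I(f)$ may be a point of definition of $f^m$ (already for a quadratic Cremona involution, $I(f)\neq\varnothing$ while $I(f^2)=\varnothing$); as $I(f)$ is a finite set of points on the surface, a point $P\in S(\qb)_{f^m}\setminus S(\qb)_f$ has $f^m$-orbit that either repeats a value — whence it is finite and Conjecture \ref{dml} is immediate for $P$ — or has a tail lying in $S(\qb)_f$, reducing to the previous case.

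The hard part, and the only place beyond formal manipulation, is precisely this indeterminacy bookkeeping in the forward direction of part $(1)$ together with the orbit-never-enters-$U$ branch of part $(2)$: one must check that the discrepancy between the set of $\qb$-points with a well-defined $f$-orbit and the set of those with a well-defined $f^m$-orbit (or the points that avoid $U$ forever) only ever produces configurations that are already preperiodic, or that collapse to a lower-dimensional dynamical system where the statement is already known.
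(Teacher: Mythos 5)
The paper does not prove this lemma: it is stated as a verbatim citation to Xie, \cite[Theorem 4.3 (i) and (ii)]{Xi14}, so there is no ``paper's own proof'' to compare against. Your argument is therefore an independent reconstruction, and the formal skeleton — identifying orbits across the two systems and using that arithmetic progressions are stable under taking residue classes, dilation by $m$, and translation — is exactly the right framework. Part $(2)$ is essentially complete: the reduction to the branch where $\OO_f(P)$ never meets $U$ lands on a curve, where the dynamical Mordell--Lang statement is classical, and the translation argument in the other branch is fine.

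There is a genuine gap in the backward direction of part $(1)$, in the final paragraph. You claim that a point $P\in S(\qb)_{f^m}\setminus S(\qb)_f$ with infinite $f^m$-orbit has a tail lying in $S(\qb)_f$, and the stated justification is that $I(f)$ is a finite set, so the orbit eventually avoids it. That only gives $(f^m)^i(P)\notin I(f)$ for $i\gg 0$, which is far weaker than $(f^m)^{i_0}(P)\in S(\qb)_f$. Membership in $S(\qb)_f$ requires that \emph{every} intermediate $f$-iterate $f^j\bigl((f^m)^i(P)\bigr)$, $1\le j\le m-1$, also avoid $I(f)$, and these intermediate points are not tracked by the $f^m$-orbit at all. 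Concretely, the set of points $R$ at which the step-by-step $f$-iteration breaks down within $m$ steps is a proper Zariski-closed subset $\mathcal{B}_m\subsetneq S$ that typically has one-dimensional components, not just the finite set $I(f)$; a Zariski-dense $f^m$-orbit can meet $\mathcal{B}_m$ infinitely often without being periodic, and nothing you have said rules this out. (It is also worth noting that $f^m(R)$ can be well-defined as a rational-map value even when $f^{m-1}(R)\in I(f)$, so ``$(f^m)^i(P)$ defined for all $i$'' does not propagate to ``$f^k\bigl((f^m)^{i_0}(P)\bigr)$ defined for all $k$''.) This is precisely the ``indeterminacy bookkeeping'' you flag as the hard part, but the proposal stops short of an actual argument there; Xie's proof of \cite[Theorem 4.3]{Xi14}, which the paper invokes, is where this bookkeeping is carried out (it uses the surface geometry more seriously, e.g.\ resolving indeterminacy and controlling the contracted/exceptional curves). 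As written, the step is an assertion, not a proof.
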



\section{Technical lemmas}\label{tel}

In order to show Theorems \ref{bdd} and \ref{dt}, we need the following two technical results (Lemmas \ref{tec} and \ref{tech}), whose proofs are refinements of the argument of \cite[Theorem 7.19]{Si07}. 

\begin{lemma}\label{tec} Let $g: X \dashrightarrow X$ be a dominant rational self-map of a smooth projective variety $X$. Fix an ample divisor $H$ on $X$. Let 
	\[ d_1, d_2 > 0, \ \ \text{and} \ \ \zeta > \frac{1}{d_1} + \frac{1}{d_2} = \frac{d_1 + d_2}{d_1d_2}  
	\]
	be constants. Assume that 
	\[ \frac{h_H(g^2(R))}{d_1} + \frac{h_H(R)}{d_2} \geq \zeta h_H(g(R)) - C
	\]
	holds for all $R \in X(\qb)_g \cap U(\qb)$, where $U \subset X$ is a non-empty open subset, and $C > 0$ is independent of the choice of $R$. 
	
	Let $Q\in X(\qb)_g$ be a $\qb$-point such that its forward $g$-orbit $\OO_g(Q) \subset U(\qb)$. If $\OO_g(Q)$ is infinite, then 
	\[ \underline{\alpha}_g(Q) \geq \frac{\zeta d_1}{2}. 
	\] 
\end{lemma}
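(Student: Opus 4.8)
The idea is to iterate the hypothesis along the orbit $x_i := h_H(g^i(Q))$ (for $i \geq 0$), which we may assume is non-negative up to replacing $H$ by a large multiple, and turn the three-term inequality into a linear recurrence-type lower bound. Writing $a_i := h_H(g^i(Q))$, the hypothesis applied to $R = g^{i}(Q)$ (legitimate since $\OO_g(Q) \subset U(\qb)$ and $Q \in X(\qb)_g$) gives
\[ \frac{a_{i+2}}{d_1} + \frac{a_i}{d_2} \geq \zeta a_{i+1} - C \qquad \text{for all } i \geq 0. \]
The plan is to find a constant $\lambda > 1$ and a linear form $\ell_i = u\, a_{i+1} + v\, a_i$ (with $u, v > 0$) such that $\ell_{i+1} \geq \lambda\, \ell_i - C'$ for a uniform $C'$; then by a standard telescoping/geometric argument $\ell_i \gg \lambda^i$ once $\ell_i$ is ever large, which forces $\limsup$ — in fact $\liminf$ — of $a_i^{1/i}$ to be at least $\lambda$, i.e. $\underline{\alpha}_g(Q) \geq \lambda$. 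One then optimizes $\lambda$ over the admissible linear forms to land on $\zeta d_1 / 2$.

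\textbf{Key steps, in order.} First, normalize $H$ so that $h_H \geq 0$ on $X(\qb)$, and set $a_i = h_H(g^i(Q))$; record the inequality displayed above. Second, rewrite it as $a_{i+2} \geq d_1\zeta\, a_{i+1} - \tfrac{d_1}{d_2} a_i - d_1 C$ and look for $\mu > 0$ with
\[ a_{i+2} + \mu\, a_{i+1} \geq \lambda\,(a_{i+1} + \mu\, a_i) - d_1 C, \qquad \lambda := d_1\zeta - \mu, \quad \lambda \mu = \tfrac{d_1}{d_2}, \]
which is exactly the condition that $\lambda, \mu$ are the two roots of $t^2 - d_1\zeta\, t + d_1/d_2 = 0$. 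The discriminant is $d_1^2\zeta^2 - 4 d_1/d_2$, and the hypothesis $\zeta > (d_1+d_2)/(d_1 d_2)$ should be shown to guarantee the larger root $\lambda$ satisfies $\lambda > 1$ (indeed $\zeta > 1/d_1 + 1/d_2$ makes $t=1$ lie strictly between the roots when both are real, and one checks the roots are real and positive in the relevant range), and moreover $\lambda \geq \zeta d_1/2$ — the latter because $\lambda = \tfrac{1}{2}(d_1\zeta + \sqrt{d_1^2\zeta^2 - 4d_1/d_2}) \geq \tfrac{1}{2} d_1\zeta$. Third, set $\ell_i := a_{i+1} + \mu\, a_i \geq 0$; the recursion gives $\ell_{i+1} \geq \lambda\, \ell_i - d_1 C$, hence $\ell_{i} - \tfrac{d_1 C}{\lambda - 1} \geq \lambda^i\big(\ell_0 - \tfrac{d_1 C}{\lambda-1}\big)$. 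Fourth, handle the two cases: if $\ell_0 > \tfrac{d_1 C}{\lambda - 1}$ we immediately get $\ell_i \gg \lambda^i$, so $a_{i+1} \geq \ell_i - \mu a_i$ needs a little care — better to note $a_{i+1} \geq \ell_i - \mu\,\ell_{i-1}/1$... instead simply observe $\max(a_{i+1}, a_i) \geq \ell_i/(1+\mu) \gg \lambda^i$, so $\limsup a_i^{1/i} \geq \lambda$; to upgrade $\limsup$ to $\liminf$, use that $\ell_i \geq \lambda^i(\ell_0 - \tfrac{d_1C}{\lambda-1}) + \tfrac{d_1C}{\lambda-1}$ together with the submultiplicativity bound $a_{i+1} \leq C_0 a_i + C_0$ (from \cite[Lemma 3.3]{Ma20}) to conclude that $a_i$ cannot drop too far below $\lambda^i$ at any index, giving $\liminf a_i^{1/i} \geq \lambda \geq \zeta d_1/2$. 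Finally, if $\ell_0 \leq \tfrac{d_1C}{\lambda-1}$, replace $Q$ by $g^k(Q)$ for suitable $k$: since $\OO_g(Q)$ is infinite, $h_H(g^k(Q)) \to \infty$ along a subsequence (heights in an infinite orbit are unbounded, as the orbit is not a set of bounded height), so some $\ell_k$ exceeds the threshold; then apply the previous case to $Q' = g^k(Q)$ and use $\underline{\alpha}_g(g^k(Q)) = \underline{\alpha}_g(Q)$ (Lemma \ref{ks13}).

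\textbf{Main obstacle.} The delicate point is the passage from a lower bound on $\ell_i = a_{i+1} + \mu a_i$ to a lower bound on $\underline{\alpha}_g(Q) = \liminf a_i^{1/i}$ rather than merely on $\overline{\alpha}_g(Q)$: a priori $\ell_i$ being large only forces one of $a_i, a_{i+1}$ to be large. Controlling the "gaps" — ruling out that $a_i$ is tiny for infinitely many $i$ while $a_{i+1}$ carries all the growth — is where one must invoke the one-step comparison $h^+_H(g(R)) \leq C_0\, h^+_H(R)$ from \cite[Lemma 3.3]{Ma20}: this bounds $a_{i+1}$ in terms of $a_i$ from above, hence bounds $a_i$ from below in terms of $a_{i+1}$, so growth of $\ell_i$ propagates to $a_i$ itself. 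The rest is a routine optimization of the quadratic $t^2 - d_1\zeta t + d_1/d_2$ and bookkeeping with the constant $d_1 C/(\lambda - 1)$. I also expect that proving $\lambda > 1$ cleanly from $\zeta > 1/d_1 + 1/d_2$ (including checking real roots) will need a short separate computation, but it is elementary.
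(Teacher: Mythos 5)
Your overall strategy matches the paper's: turn the three-term hypothesis into a linear recurrence whose characteristic polynomial is $t^2 - d_1\zeta\,t + d_1/d_2$, show the dominant root $\lambda$ exceeds $\zeta d_1/2$, and derive a contradiction with Northcott if the growth does not kick in. However, there is a genuine sign error at the central step that breaks the argument as written. You set $\ell_i = a_{i+1} + \mu a_i$ with $\mu > 0$, but the inequality from the hypothesis reads $a_{i+2} \geq d_1\zeta\,a_{i+1} - \tfrac{d_1}{d_2}a_i - d_1 C$, with a \emph{negative} coefficient on $a_i$. With your sum form, $\ell_{i+1} \geq \lambda\ell_i - d_1C$ would require
\[
(d_1\zeta + \mu)\,a_{i+1} - \tfrac{d_1}{d_2}\,a_i \;\geq\; \lambda\,a_{i+1} + \lambda\mu\,a_i ,
\]
which (since the $a_j$ are nonnegative) forces $-\tfrac{d_1}{d_2} \geq \lambda\mu$ — impossible — unless you already know $a_{i+1} \geq \lambda a_i$, which is what you are trying to prove. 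The linear form must be a \emph{difference}, $\ell_i = a_{i+1} - \mu a_i$ with $\mu$ the smaller root; then the coefficient comparison works with equality at $\lambda + \mu = d_1\zeta$, $\lambda\mu = d_1/d_2$. This is exactly what the paper does (its $\ell_n$ is $\tfrac{a_{n+1}}{d_1} - \tfrac{a_n}{\alpha d_2} - \tfrac{C}{\alpha-1}$, a difference with the constant absorbed).

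Once the sign is fixed, your ``Main obstacle'' paragraph and the appeal to the one-step height comparison from \cite[Lemma 3.3]{Ma20} become unnecessary: since $a_i \geq 0$, you get $a_{i+1} = \ell_i + \mu a_i \geq \ell_i$ outright, so $\ell_i \gg \lambda^i$ gives $\liminf a_i^{1/i} \geq \lambda$ directly with no gap-control needed. Your final case-split (shift the basepoint until some $\ell_k$ exceeds the threshold $d_1C/(\lambda-1)$, else all heights in the orbit are bounded and Northcott gives a contradiction) is sound, but it relies on $\mu < 1$ so that $a_{k+1} \leq \mu a_k + \text{const}$ telescopes to a uniform bound; you do implicitly have this via your observation that $t=1$ lies strictly between the roots, but it should be stated as the reason the telescoping closes. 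The paper packages this last step a bit differently, introducing $\ell_\infty(Q) = \liminf_i h(g^i(Q))/\alpha^i$, showing $\ell_\infty(Q)=0$ would force the orbit to have bounded height via the ``other root'' $\beta = \alpha d_2/d_1 > 1$, and concluding $\ell_\infty(Q) > 0$; this is equivalent in spirit to your shift-the-basepoint argument but avoids choosing an explicit $k$.
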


\begin{proof} We write $h = h_H$ as $H$ is fixed. Put 
	\[ Q_k = g^k(Q)
	\]
	for all $k\geq 0$, where $Q_0 = Q$. Then $g^n(Q_k) \in X(\qb)_g \cap U(\qb)$ for all $n, k \geq 0$, and hence 
	\[ \frac{h(g^{n+2}(Q_k))}{d_1} + \frac{h(g^n(Q_k))}{d_2} \geq \zeta h(g^{n+1}(Q_k)) - C
	\]
	We define the sequence 
	\begin{equation*}
		\ell_n(Q_k) = \frac{h(g^{n+1}(Q_k))}{d_1} - \frac{h(g^{n}(Q_k))}{\alpha d_2} - \frac{C}{\alpha-1}
	\end{equation*} 
	for all $n, k \geq 0$, where 
	\begin{equation}\label{defa}
		\alpha = \frac{\zeta d_1d_2 + \sqrt{\zeta^2 d_1^2d_2^2 - 4d_1d_2}}{2d_2}.  
	\end{equation}
	We note that 
	\begin{equation}\label{esta} 
		\alpha > \frac{d_1 + d_2 + \vert d_1 - d_2 \vert}{2d_2} \geq 1, 
	\end{equation}
	\begin{equation}\label{esta2} 
		\alpha \geq \frac{\zeta d_1d_2}{2d_2} = \frac{\zeta d_1}{2}, 
	\end{equation}
	and 
	\begin{equation}
		\frac{\alpha}{d_1} + \frac{1}{\alpha d_2} = \zeta. 
	\end{equation} 
	Then $\ell_n(Q_k)$ satisfies that
	\begin{equation}\label{cal}
		\begin{aligned}
			&\ell_{n+1}(Q_k) - \alpha \ell_n(Q_k) \\ 
			= \ &\left( \frac{h(g^{n+2}(Q_k))}{d_1} - \frac{h(g^{n+1}(Q_k))}{\alpha d_2} - \frac{C}{\alpha-1} \right) \\
			&- \alpha \left( \frac{h(g^{n+1}(Q_k))}{d_1} - \frac{h(g^{n}(Q_k))}{\alpha d_2} - \frac{C}{\alpha-1} \right) \\
			= \ &\left( \frac{h(g^{n+2}(Q_k))}{d_1} + \frac{h(g^{n}(Q_k))}{d_2} \right) - \left( \frac{\alpha}{d_1} + \frac{1}{\alpha d_2} \right) h(g^{n+1}(Q_k)) + C \\ 
			\geq \ &\left( \zeta - \frac{\alpha}{d_1} -  \frac{1}{\alpha d_2} \right) h(g^{n+1}(Q_k)) = 0.  
		\end{aligned} 
	\end{equation}
	Thus, by induction on $n$, we obtain that
	\begin{equation*}
		\ell_n(Q_k) \geq \alpha^n\ell_0(Q_k)
	\end{equation*}
	for all $n, k \geq 0$, which is equivalent to 
	\begin{equation}\label{key}
		\begin{aligned}
			&\frac{h(g^{n+1}(Q_k))}{\alpha^n d_1} - \frac{C}{\alpha^n (\alpha-1)} \\
			\geq \  &\frac{h(g^{n}(Q_k))}{\alpha^{n+1} d_2} + \frac{h(g(Q_k))}{d_1} - \frac{h(Q_k)}{\alpha d_2} - \frac{C}{\alpha-1}
		\end{aligned}
	\end{equation}
	by the definition of $\ell_n(Q_k)$.

	We define 
	\begin{equation*}
		\ell_{\infty}(Q_k) = \liminf_{i\to +\infty} \frac{h(g^{i}(Q_k))}{\alpha^i}.  
	\end{equation*}
	Then clearly 
	\begin{equation*}
		0\leq \ell_{\infty}(Q_k) \leq +\infty, 
	\end{equation*}
	and 
	\begin{equation}\label{ite} 
		\ell_{\infty}(Q_k) = \alpha^k \ell_{\infty}(Q) \ \ \text{for all } k\geq 0. 
	\end{equation} 
	Letting $n\to +\infty$ and taking liminf in (\ref{key}), we obtain that
	\begin{equation}\label{key2}
		\frac{\alpha \ell_{\infty}(Q_k)}{d_1} \geq \frac{\ell_{\infty}(Q_k)}{\alpha d_2} + \frac{h(g(Q_k))}{d_1} - \frac{h(Q_k)}{\alpha d_2} - \frac{C}{\alpha-1}.   
	\end{equation}

	By assumption, $\OO_g(Q)$ is infinite. We claim that $\ell_{\infty}(Q) > 0$. Otherwise, $\ell_{\infty}(Q) = 0$, and hence $\ell_{\infty}(Q_k) = 0$ for all $k\geq 0$ due to (\ref{ite}). It then follows that 
	\begin{equation}\label{iteb}
		\frac{C}{\alpha-1} \geq \frac{h(g(Q_k))}{d_1} - \frac{h(Q_k)}{\alpha d_2}. 
	\end{equation}
	from (\ref{key2}). We set
	\begin{equation}\label{deb}
		\beta  = \frac{\zeta d_1d_2 + \sqrt{\zeta^2 d_1^2d_2^2 - 4d_1d_2}}{2d_1}, 
	\end{equation}
	which clearly satisfies 
	\begin{equation}
		\alpha d_2 = \beta d_1, \ \ \beta > 1, \ \ \text{and} \ \ \frac{\beta}{d_2} + \frac{1}{\beta d_1} = \zeta. 
	\end{equation}
	Thus, we have the inequality 
	\begin{equation*}
		\frac{d_1 C}{\alpha-1} \geq h(g(Q_k)) -  \frac{h(Q_k)}{\beta}
	\end{equation*} 
	from (\ref{iteb}) for all $k \geq 0$, which by the definition of $Q_k$ is equivalent to 
	\begin{equation} 
		\frac{d_1 C}{\alpha-1} \geq h(g^{k+1}(Q)) - \frac{h(g^{k}(Q))}{\beta}. 
	\end{equation}
	Therefore, for $n \geq 1$, 
	\begin{equation*}
		\begin{aligned}
			h(g^n(Q)) &= \sum^n_{i=1} \frac{1}{\beta^{n-i}} \left( h(g^i(Q)) - \frac{h(g^{i-1}(Q))}{\beta} \right) + \frac{h(Q)}{\beta^n} \\ 
			&\leq \left( \sum^n_{i=1} \frac{1}{\beta^{n-i}} \right) \frac{d_1 C}{\alpha-1} + \frac{h(Q)}{\beta^n} \\
			&\leq \dfrac{\beta d_1 C}{(\beta - 1)(\alpha - 1)} + \vert h(Q) \vert, 
		\end{aligned}
	\end{equation*}
	where we use the facts that $\alpha, \beta > 1$, and $C > 0$. This implies that the set $\OO_g(Q) = \{ g^n(Q) : n\geq 0 \}$ has bounded height. It is clear that $\OO_g(Q)$ also has bounded degree over $\QQ$. Hence by the Northcott finiteness property (\cite[Theorem B.3.2.(g)]{HS00}), we obtain that $\OO_g(Q)$ is a finite set. This is a contradiction. Therefore, $\ell_{\infty}(Q) > 0$.

	Since 
	\[ \ell_{\infty}(Q) = \liminf_{i\to +\infty} \frac{h(g^{i}(Q))}{\alpha^i} 
	\] 
	by definition, we can easily conclude that 
	\[ \liminf_{i\to +\infty} h(g^i(Q))^{1/i} \geq \alpha. 
	\]
	It then follows that 
	\begin{equation*}
		\begin{aligned}
			\underline{\alpha}_g(Q) &= \liminf_{i\to +\infty} h^{+}(g^i(Q))^{1/i} \\
			&\geq \liminf_{i\to +\infty} h(g^i(Q))^{1/i} \\ 
			&\geq \alpha \geq \frac{\zeta d_1}{2}, 
		\end{aligned}
	\end{equation*}
	where the last inequality follows from (\ref{esta2}). 
\end{proof}

\begin{lemma}\label{tech} Let $f: X \dashrightarrow X$ be a birational self-map of a smooth projective variety $X$. Fix an ample divisor $H$ on $X$. Let 
\[ d_1, d_2 > 0, \ \ \text{and} \ \ \zeta > \frac{1}{d_1} + \frac{1}{d_2} = \frac{d_1 + d_2}{d_1d_2}  
\]
be constants. Assume that we have
\begin{equation}\label{hiq}
\frac{h_H(f(R))}{d_1} + \frac{h_H(f^{-1}(R))}{d_2} \geq \zeta h_H(R) - C
\end{equation}
for all $R\in X(\qb)_f \cap X(\qb)_{f^{-1}} \cap U(\qb)$ where $U\subset X$ is a non-empty Zariski open subset, and the constant $C > 0$ is independent of the choice of $R$. 

Then there is a constant $\widetilde{C} > 0$ such that 
\[ h_H(Q) \leq \widetilde{C} 
\]
for all $Q\in X(\qb)_f \cap X(\qb)_{f^{-1}}$ satisfying $\bo_f(Q) \subset U(\qb)$ and $\bo_f(Q)$ is finite. 
\end{lemma}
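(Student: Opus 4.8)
The plan is to imitate the proof of Lemma \ref{tec}, but working with the full orbit of a periodic point rather than a forward orbit, and exploiting the symmetry between $f$ and $f^{-1}$ built into hypothesis (\ref{hiq}). First I would fix $h = h_H$ and, given a periodic point $Q$ with $\bo_f(Q) \subset U(\qb)$, set $Q_k = f^k(Q)$ for all $k \in \ZZ$; by periodicity there are only finitely many distinct such points, say with period $N$, so $Q_{k+N} = Q_k$. Applying (\ref{hiq}) to each $R = Q_k$ (which is legitimate since every $Q_k$ lies in $X(\qb)_f \cap X(\qb)_{f^{-1}} \cap U(\qb)$) gives
\[
\frac{h(Q_{k+1})}{d_1} + \frac{h(Q_{k-1})}{d_2} \geq \zeta h(Q_k) - C
\]
for every $k \in \ZZ$.

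Next I would introduce the same auxiliary quantity $\alpha$ as in (\ref{defa}), namely $\alpha = \bigl(\zeta d_1 d_2 + \sqrt{\zeta^2 d_1^2 d_2^2 - 4 d_1 d_2}\bigr)/(2 d_2)$, which is well-defined and $> 1$ precisely because $\zeta > (d_1+d_2)/(d_1d_2)$, and which satisfies $\alpha/d_1 + 1/(\alpha d_2) = \zeta$. The point is that the linear recursion above can be compared to the two-term recursion governed by the characteristic roots $\alpha d_2/d_1$ and its reciprocal-type partner. Concretely, I would consider $m_k = \dfrac{h(Q_{k+1})}{d_1} - \dfrac{h(Q_k)}{\alpha d_2} - \dfrac{C}{\alpha - 1}$; the same telescoping computation as in (\ref{cal}) shows $m_{k+1} \geq \alpha m_k$ for all $k \in \ZZ$ (not just $k \geq 0$). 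Since the sequence $(m_k)_{k \in \ZZ}$ is periodic with period $N$ (being a fixed function of the periodic sequence $Q_k$) and satisfies $m_{k+N} \geq \alpha^N m_k$ with $\alpha^N > 1$, we must have $m_k \leq 0$ for all $k$; otherwise iterating $N$ times around the cycle would force $m_k = m_{k+N} \geq \alpha^N m_k > m_k$, a contradiction. Hence
\[
h(Q_{k+1}) \leq \frac{d_1}{\alpha d_2} h(Q_k) + \frac{d_1 C}{\alpha - 1}
\]
for every $k$. Writing $\beta = \alpha d_2 / d_1 > 1$ (as in (\ref{deb})), this is $h(Q_{k+1}) \leq h(Q_k)/\beta + d_1 C/(\alpha-1)$.

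Finally, iterating this contraction-type inequality around the cycle: for any $k$, applying it $N$ times and using $Q_{k+N} = Q_k$ gives
\[
h(Q_k) \leq \frac{h(Q_k)}{\beta^N} + \Bigl(\sum_{j=0}^{N-1} \frac{1}{\beta^j}\Bigr) \frac{d_1 C}{\alpha - 1} \leq \frac{h(Q_k)}{\beta^N} + \frac{\beta}{\beta - 1}\cdot \frac{d_1 C}{\alpha - 1},
\]
and since $\beta^N > 1$ we can solve for $h(Q_k)$ to obtain $h(Q_k) \leq \dfrac{\beta^N}{\beta^N - 1}\cdot\dfrac{\beta d_1 C}{(\beta-1)(\alpha-1)}$. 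The right-hand side still depends on $N$, which a priori is unbounded over all periodic $Q$; but $\beta^N/(\beta^N - 1) \leq \beta/(\beta-1)$ since $\beta > 1$ and $N \geq 1$, so in fact $h(Q) = h(Q_0)$ is bounded by $\widetilde{C} := \dfrac{\beta^2 d_1 C}{(\beta-1)^2(\alpha-1)}$, a constant independent of $Q$. This gives the desired uniform bound. I expect the only genuinely delicate point to be the sign argument showing $m_k \leq 0$ for all $k$ (equivalently, ruling out a strictly positive periodic solution of a sub-solution to an expanding recursion) and making sure the final constant is genuinely uniform in the period $N$; the algebraic identities involving $\alpha$ and $\beta$ are exactly those already verified in the proof of Lemma \ref{tec}, so they can be quoted rather than rederived.
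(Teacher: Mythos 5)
Your proof is correct, and it takes a genuinely different route from the paper's. The paper works two-sidedly: it defines a forward sequence $\ell^{+}_n$ governed by $\alpha$ \emph{and} a backward sequence $\ell^{-}_n$ governed by $\beta$, derives the one-step growth inequalities $\ell^{+}_{n+1}\ge \alpha\ell^{+}_n$, $\ell^{-}_{n+1}\ge \beta\ell^{-}_n$, adds the two iterated chains to produce the key display (\ref{key3}), and then exploits the boundedness of $h$ on the finite orbit $\bo_f(Q)$ to kill the $\alpha^{-n}$- and $\beta^{-n}$-weighted terms in the limit $n\to+\infty$; the period $N$ never appears explicitly. You instead work one-sidedly with only the forward quantity $m_k$, use the period $N$ to close the loop and force $m_k\le 0$ by the expanding-recursion contradiction, and then iterate the resulting contraction $h(Q_{k+1})\le h(Q_k)/\beta + d_1C/(\alpha-1)$ around the cycle. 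Your sign argument is the clever replacement for the paper's limiting step, and the observation $\beta^N/(\beta^N-1)\le\beta/(\beta-1)$ correctly makes the final bound uniform in $N$. One small remark: iterating exactly rather than bounding $\sum_{j=0}^{N-1}\beta^{-j}$ by the infinite series makes the factor $1-\beta^{-N}$ cancel on both sides, giving the slightly cleaner $h(Q)\le \beta d_1 C/\big((\beta-1)(\alpha-1)\big)$ with no $N$-dependence at all, but your weaker bound of course still suffices. The paper's two-sided approach has the cosmetic advantage of being phrased purely in terms of the finiteness of $\bo_f(Q)$ and running in parallel with the proof of Lemma \ref{tec}; your one-sided approach is more elementary, makes the periodicity do all the work, and avoids the backward half of the hypothesis (\ref{hiq}) ever being invoked at $f^{-n}(Q)$ for $n$ large.
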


\begin{proof} We write $h = h_H$ as $H$ is fixed. Let $Q\in X(\qb)_f \cap X(\qb)_{f^{-1}}$ be a $\qb$-point with $\bo_f(Q) \subset U(\qb)$. Then $f^n(Q) \in X(\qb)_f \cap X(\qb)_{f^{-1}} \cap U(\qb)$ for all $n \geq 0$, and hence 
\[ \frac{h(f^{n+1}(Q))}{d_1} + \frac{h(f^{n-1}(Q))}{d_2} \geq \zeta h(f^n(Q)) - C. 
\]
Define the sequence 
\begin{equation*}
\ell^{+}_n(Q) = \frac{h(f^n(Q))}{d_1} - \frac{h(f^{n-1}(Q))}{\alpha d_2} - \frac{C}{\alpha-1}
\end{equation*} 
for all $n \geq 0$, where 
\begin{equation}\label{defa2}
\alpha = \frac{\zeta d_1d_2 + \sqrt{\zeta^2 d_1^2d_2^2 - 4d_1d_2}}{2d_2}. 
\end{equation}
We note that 
\begin{equation}\label{esta22} 
\alpha > 1, \text{ and } \frac{\alpha}{d_1} + \frac{1}{\alpha d_2} = \zeta. 
\end{equation}
Then similar to (\ref{cal}), $\ell^{+}_n(Q)$ satisfies that
\begin{equation*}
\ell^{+}_{n+1}(Q) \geq \alpha \ell^{+}_n(Q), 
\end{equation*}
and hence by induction on $n$, we obtain that
\begin{equation*}
\ell^{+}_{n+1}(Q) \geq \alpha^n \ell^{+}_1(Q)
\end{equation*}
for all $n \geq 0$.

On the other hand, as $f^{-n}(Q) \in X(\qb)_f \cap X(\qb)_{f^{-1}} \cap U(\qb)$ for all $n \geq 0$, we have 
\begin{equation*}\label{estf}
\frac{h(f^{-n+1}(Q))}{d_1} + \frac{h(f^{-n-1}(Q))}{d_2} \geq \zeta h(f^{-n}(Q)) - C.
\end{equation*}
Define the sequence 
\begin{equation*}
\ell^{-}_n(Q) = \frac{h(f^{-n}(Q))}{d_2} - \frac{h(f^{-n+1}(Q))}{\beta d_1} - \frac{C}{\beta-1}
\end{equation*} 
for all $n \geq 0$, where 
\begin{equation*}
\beta  = \frac{\zeta d_1d_2 + \sqrt{\zeta^2 d_1^2d_2^2 - 4d_1d_2}}{2d_1}. 
\end{equation*}
Clearly 
\begin{equation*}
\alpha d_2 = \beta d_1, \beta > 1, \text{ and }	\frac{\beta}{d_2} + \frac{1}{\beta d_1} = \zeta. 
\end{equation*}
An analogous calculation to (\ref{cal}) shows that 
\begin{equation*}
\ell^{-}_{n+1}(Q) \geq \beta \ell^{-}_n(Q), 
\end{equation*} 
from which we deduce that
\begin{equation*} 
\ell^{-}_{n+1}(Q) \geq \beta^n\ell^{-}_{1}(Q)
\end{equation*}
for all $n \geq 0$. It follows that 
\begin{equation*}
\begin{aligned}
&\frac{\ell^{+}_{n+1}(Q)}{\alpha^n} + \frac{\ell^{-}_{n+1}(Q)}{\beta^n} \geq \ell^{+}_1(Q) + \ell^{-}_1(Q) \\
= \ &\frac{h(f(Q))}{d_1} - \frac{h(Q)}{\alpha d_2} - \frac{C}{\alpha-1} + \frac{h(f^{-1}(Q))}{d_2} - \frac{h(Q)}{\beta d_1} - \frac{C}{\beta-1} \\ 
\geq \ &\left( \zeta - \frac{1}{\alpha d_2} - \frac{1}{\beta d_1} \right) h(Q) - \left( 1 + \frac{1}{\alpha-1} + \frac{1}{\beta-1} \right) C, 
\end{aligned} 
\end{equation*}
This is equivalent to 
\begin{equation}\label{key3}
\begin{aligned} 
&\frac{h(f^{n+1}(Q))}{\alpha^n d_1} + \frac{h(f^{-n-1}(Q))}{\beta^n d_2} \\
&- \frac{C}{\alpha^n(\alpha-1)} -  \frac{C}{\beta^n(\beta-1)} + \frac{(\alpha\beta-1)C}{(\alpha-1)(\beta-1)} \\
\geq \ &\left( \zeta - \frac{1}{\alpha d_2} - \frac{1}{\beta d_1} \right) h(Q) + \frac{h(f^{n}(Q))}{\alpha^{n+1} d_2} + \frac{h(f^{-n}(Q))}{\beta^{n+1} d_1}  
\end{aligned}
\end{equation}
by the definitions of $\ell^{+}$ and $\ell^{-}$. 

Assume that $\bo_f(Q)$ is finite. Then $h(f^{m}(Q))$ is bounded independently of $m \in \ZZ$. Letting $n \to +\infty$ in (\ref{key3}), we obtain that 
\begin{equation*}
\frac{(\alpha\beta-1)C}{(\alpha-1)(\beta-1)} \geq \left( \zeta - \frac{1}{\alpha d_2} - \frac{1}{\beta d_1} \right) h(Q). 
\end{equation*}
Note that 
\begin{equation*}
\zeta - \frac{1}{\alpha d_2} - \frac{1}{\beta d_1} = \frac{\sqrt{\zeta^2 d_1^2 d_2^2 - 4d_1d_2}}{d_1 d_2} > 0. 
\end{equation*}
Therefore, $h(Q) \leq \widetilde{C}$, where 
\begin{equation*}
\widetilde{C} = \frac{(\alpha\beta-1)C}{(\alpha-1)(\beta-1)} \cdot \left( \zeta - \frac{1}{\alpha d_2} - \frac{1}{\beta d_1} \right)^{-1} 
\end{equation*}
is independent of the choice of $Q$. 
\end{proof}


\section{Proof of Theorem \ref{bdd}}\label{thm2} 

Let us start to prove Theorem \ref{bdd}. Recall that a subset $W \subset X(\qb)$ is said to be \textit{of bounded height}, if there is a constant $c_H$ such that $h_H(P) \leq c_H$ for all $P \in W$. Notice that the definition of a set of bounded height is independent of the choice of ample divisors $H$ (see e.g., \cite[Theorem 4.1(vii)]{Ka08}). The Northcott finiteness property (\cite[Theorem B.3.2(g)]{HS00}) claims that if $W$ is a set of bounded height, then the set $\{ P \in W : [\QQ(P) : \QQ] \leq B \}$ is finite for each positive integer $B$.

In view of Lemma \ref{tech}, we only need to establish (\ref{hiq}), which will be done in the next lemma. The proof uses resolution of indeterminacy as in \cite{Si94, Ka06, Le13} and Siu's numerical criterion for bigness as in \cite{Si11} together with a new ingredient, namely, Dang's inequality (\cite[Corollary 3.4.5]{Da20}).

\begin{lemma}\label{aux} Let $f:  X \dashrightarrow X$ be a birational self-map of a smooth projective variety $X$. Fix an ample divisor $H$ on $X$. Assume that $f$ is cohomologically hyperbolic. Then there exist $n\in \ZZ_{>0}$, $A > 2$, $C > 0$ and a non-empty Zariski open subset $U\subset X$, such that 
\[ h_H(f^n(R)) + h_H(f^{-n}(R)) \geq Ah_H(R) - C \] 
for all $R\in X(\qb)_{f^n} \cap X(\qb)_{f^{-n}} \cap U(\qb)$. 
\end{lemma}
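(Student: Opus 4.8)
The plan is to pass to a common resolution of $f^n$ and $(f^n)^{-1}$ and exploit the fact that, for a cohomologically hyperbolic birational map in the appropriate normal form, the pullback action dominates the pushforward action on an ample class. Concretely, I would first choose $n$ large. By Dang's inequality (\cite[Corollary 3.4.5]{Da20}), for any $\varepsilon > 0$ one has $\deg_{1,H}(f^{i+j}) \le C_\varepsilon (\delta_1(f)+\varepsilon)^{?}\cdots$; more to the point, cohomological hyperbolicity with $p=1$ or $p=2$ (the only possibilities when $f$ is birational) means $\delta_1(f) > \delta_0(f) = 1$ when $\dim X = 2$, and in general $\delta_1(f)^2 > \delta_0(f)\delta_2(f)$, i.e. $\delta_1(f)^2 > \delta_2(f)$. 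The key numerical input is that the submultiplicativity/superadditivity estimates for the degree sequences of $f$ and $f^{-1}$ together with log-concavity force $\deg_{1,H}(f^n)$ and $\deg_{1,H}(f^{-n})$ to grow like $\delta_1(f)^n$ while the ``mixed'' intersection numbers that obstruct functoriality grow strictly slower. So for $n \gg 0$ I can arrange $\deg_{1,H}(f^n) \ge A_0 \cdot (\text{obstruction term})$ for any prescribed $A_0$; in particular bigger than twice it.

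Next I would resolve: let $\pi: Y \to X$ be a smooth projective variety with morphisms $p = \pi$, and let $F: Y \to X$ resolve $f^n$, so $f^n \circ \pi = F$; similarly arrange (possibly on a further blowup) that the same $Y$ resolves $(f^n)^{-1}$ via $G: Y \to X$. Then on $Y$ one has $F^* H \equiv a\,\pi^* H + E^{+}$ and $G^* H \equiv b\,\pi^* H + E^{-}$ for effective exceptional-type corrections, where $a = a_n$, $b = b_n$ are comparable to $\delta_1(f)^n$. The point of Siu's numerical bigness criterion (as used in \cite{Si11}) is that $F^* H + G^* H - A\,\pi^* H$ is big provided the top self-intersection-type inequality $\big((F^*H+G^*H)^{\dim X}\big) > A\cdot\big((F^*H+G^*H)^{\dim X - 1}\cdot \pi^*H\big)$ holds (up to the usual Siu constant), and all the intersection numbers appearing here are exactly the $\deg_{k,H}(f^{\pm n})$ and their mixed analogues, which by the previous paragraph satisfy the needed inequality with $A > 2$ once $n$ is large. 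Bigness of $F^*H + G^*H - A\,\pi^*H$ means it is linearly equivalent to an effective divisor plus ample, so there is a proper closed subset (the union of the base locus and the images of the exceptional loci) whose complement $U$ we take; for $R$ with $\pi^{-1}$-lift avoiding that locus, functoriality of heights along the \emph{morphisms} $F, G, \pi$ gives $h_H(f^n(R)) + h_H(f^{-n}(R)) = h_{F^*H}(\tilde R) + h_{G^*H}(\tilde R) + O(1) \ge h_{A\pi^*H}(\tilde R) + O(1) = A\,h_H(R) - C$.

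The main obstacle I anticipate is the bookkeeping in the second paragraph: verifying cleanly that cohomological hyperbolicity really does give, for suitable $n$, the strict numerical inequality among intersection numbers on $Y$ that Siu's criterion needs with constant $A>2$ rather than merely $A>0$. In dimension two this is essentially immediate ($\delta_1 > 1$ and the mixed term is a bounded intersection number on the surface, so $\delta_1(f)^n$ wins by an arbitrarily large factor). In higher dimension one must control the full collection of mixed intersection numbers $\big((F^*H)^i\cdot(G^*H)^j\cdot(\pi^*H)^{\dim X - i - j}\big)$, and here I would invoke Dang's characterization $\delta_k(f) = \lim \|(f^i)^*|_{N^k}\|^{1/i}$ together with the concavity \eqref{kt} and the hypothesis that $\delta_p(f)$ strictly dominates its neighbors, to show every such mixed term is $o(\delta_1(f)^{n\cdot(\text{appropriate exponent})})$ relative to the leading term; this is exactly the role played by ``Dang's inequality'' cited in the remark preceding the lemma. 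A secondary, more routine, point is choosing the single $Y$ dominating both resolutions and tracking that the correction divisors $E^\pm$ are supported on a proper closed subset so that $U$ can be taken independent of $R$.
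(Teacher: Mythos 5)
Your overall framework is the same as the paper's: resolve the indeterminacy of $f^{\pm n}$ by a single smooth $V$ dominating $X$ in three ways $(\psi,\pi,\phi)$, consider the divisor $\phi^*H+\psi^*H-A\,\pi^*H$, prove it is big via Siu's numerical criterion, and then run heights through the morphisms. That much is correct and matches the paper, which takes $A=3$ and $U=X\setminus\pi(\B(D))$.

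But the numerical heart of the argument — where the cohomological-hyperbolicity hypothesis is actually used — is not handled correctly, and in a way that would genuinely break the proof beyond dimension three. You repeatedly center the estimate on $\delta_1(f)$: you claim $\deg_{1,H}(f^n)$ \emph{and} $\deg_{1,H}(f^{-n})$ both grow like $\delta_1(f)^n$, that mixed intersection numbers are $o(\delta_1(f)^{n\cdot(\cdot)})$, and that $p\in\{1,2\}$ are the only possibilities for a birational map. None of these is right in general: $\deg_{1,H}(f^{-n})$ grows like $\delta_{N-1}(f)^n$ (not $\delta_1(f)^n$); the dominant index $p$ in Definition \ref{coh} can be any $1\le p\le N-1$; and cohomological hyperbolicity does \emph{not} say $\delta_1(f)^2>\delta_2(f)$ (that weak inequality is automatic from log-concavity). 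The correct identification, which is exactly what the paper's proof isolates, is that the leading term in $(\phi^*H+\psi^*H)^N$ is $\binom{N}{p}(\phi^*H^p\cdot\psi^*H^{N-p})=\binom{N}{p}\deg_p(f^{2n})\sim\delta_p(f)^{2n}$, while each term $(\phi^*H^i\cdot\psi^*H^{N-1-i}\cdot\pi^*H)$ on the other side of Siu's inequality is bounded, via \cite[Corollary 3.4.5]{Da20}, by a dimensional constant times $\deg_i(f^n)\deg_{i+1}(f^n)\sim(\delta_i\delta_{i+1})^n$. The strict inequality you need, and which the hypothesis supplies, is precisely $\delta_p(f)^2>\delta_i(f)\delta_{i+1}(f)$ for all $i$. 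Your proposal never identifies $\deg_p(f^{2n})$ as the dominant quantity, so as stated the argument would not close for, say, a cohomologically hyperbolic birational map with $p=2$ in dimension $\ge 4$, where $\delta_1(f)$ may be strictly smaller than $\delta_p(f)$.

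A smaller point: your decomposition $F^*H\equiv a\,\pi^*H+E^+$ with $E^+$ effective is not something you can assume (and the paper does not use it); the Siu test is applied directly to $\phi^*H+\psi^*H-A\,\pi^*H$ without decomposing each pullback separately.
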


\begin{proof} Let $N = \dim X$. By assumption, there exists a positive integer $p \in \{1, 2, \dots, N-1\}$ such that the $p$-th dynamical degree
\[ \delta_p(f) > \delta_i(f)  \ \ \text{for all } p \neq i \in \{0, 1, \dots, N\}. 
\]
Then clearly 
\[ \delta_p(f)^{2} > \delta_{i}(f) \delta_{i+1}(f) \ \ \text{for all } i \in \{0, 1, \dots, N-1\}. 
\]
Since 
\[ \lim_{m \to +\infty} \left( \frac{\deg_p(f^{2m})}{\deg_i(f^m) \deg_{i+1}(f^m)} \right)^{1/m} = \frac{\delta_p(f)^2}{\delta_i(f) \delta_{i+1}(f)} > 1, 
\]
where 
\[ \deg_i (f) = \deg_{i, H} (f) = \left( f^{\ast}(H^{i})\cdot H^{N-i} \right), 
\]
we have some $n\in \ZZ_{>0}$ such that 
\[ \frac{\deg_p(f^{2n})}{\deg_i(f^n) \deg_{i+1}(f^n)} > 3N^{2N} 
\]
for all $0 \leq i \leq N-1$. We set
\[ g = f^n
\]
and claim that there exist a positive constant $C > 0$ and a non-empty Zariski open subset $U\subset X$ such that 
\[ h(g(R)) + h(g^{-1}(R)) \geq 3h(R) - C 
\] 
for all $R\in X(\qb)_{g} \cap X(\qb)_{g^{-1}} \cap U(\qb)$, where $h = h_H$. 
	
By Hironaka's theorem on resolution of indeterminacy, we may take $\pi: V \to X$ a composition of blow-ups (along smooth subvarieties of $X$), such that 
$\phi$ and $\psi$ are morphisms: 
\[ \xymatrix{ 
&  & V  \ar[d]_{\pi} \ar[dl]_{\psi} \ar[dr]^{\phi} &  \\
& X & X \ar@{-->}[l]^{g^{-1}} \ar@{-->}[r]_{g} & X. }
\] 
Consider the divisor  
\[ D = \phi^{\ast} H + \psi^{\ast} H - 3 \pi^{\ast} H.  
\] 
We claim that $D$ is big. By Siu's numerical criterion for bigness (\cite{Si93}, \cite[Theorem 2.2.15]{La04}), it is enough to show that 
\[ \left( \phi^{\ast} H + \psi^{\ast} H \right)^N > 3N \left( \phi^{\ast} H + \psi^{\ast} H \right)^{N-1} \cdot \pi^{\ast} H,  
\]
which is implied by
\[ \left( \phi^{\ast}H^p \cdot \psi^{\ast}H^{N-p} \right) > 3N \binom{N-1}{i}\left( \phi^{\ast}H^i \cdot \psi^{\ast}H^{N-1-i} \cdot \pi^{\ast} H \right)
\]
for each $0 \leq i \leq N-1$. Note that 
\[ \left( \phi^{\ast}H^p \cdot \psi^{\ast}H^{N-p} \right) = \deg_p(g^2) 
\]
and
\[ \binom{N-1}{i} \leq (N-1)^{N-1}. 
\]
Since $\phi^{\ast}H^i$ is a basepoint free class (see \cite[Definition 3.3.1, Theorem 3.3.3 (ii) and (iv)]{Da20}) and $\pi^{\ast}H$ is nef and big, by \cite[Corollary 3.4.5]{Da20}, we have 
\[ \phi^{\ast}H^i \leq (N-i+1)^i \left( \phi^{\ast}H^i \cdot \pi^{\ast}H^{N-i} \right) \pi^{\ast}H^i,  
\]
that is, 
\[ (N-i+1)^i \left( \phi^{\ast}H^i \cdot \pi^{\ast}H^{N-i} \right) \pi^{\ast}H^i - \phi^{\ast}H^i
\]
is a pseudo-effective class (see \cite[Definition 3.1.1]{Da20}), where $(N-i+1)^i \leq (N+1)^{N-1}$. Note that $\psi^{\ast}H^{N-1-i} \cdot \pi^{\ast} H$ is a nef class. Thus, 
\begin{equation*}
\begin{aligned}
&\left( \phi^{\ast}H^i \cdot \psi^{\ast}H^{N-1-i} \cdot \pi^{\ast} H \right) \\ 
\leq \ &(N+1)^{N-1} \left( \phi^{\ast}H^i \cdot \pi^{\ast}H^{N-i} \right) \left( \psi^{\ast}H^{N-1-i} \cdot  \pi^{\ast}H^{i+1} \right) \\ 
= \ &(N+1)^{N-1} \deg_i(g) \deg_{i+1}(g). 
\end{aligned} 
\end{equation*}
So it is enough to show that 
\[ \deg_p(g^2) > 3N (N^2-1)^{N-1} \deg_i(g) \deg_{i+1}(g) 
\]
for each $0 \leq i \leq N-1$. This is guaranteed by the choice of $g = f^n$.

Let $\B(D)$ denotes the stable base locus of $D$ (see \cite[Definition 2.13]{LS21}). Since $D$ is big, some positive multiple of $D$ is linearly equivalent to an effective divisor. In particular, $\B(D) \subsetneq V$ is a proper Zariski closed subset. We set 
\[ U = X \setminus \pi(\B(D)),  
\]
which is a non-empty Zariski open subset of $X$. 

For each $R\in X(\qb)_{g} \cap X(\qb)_{g^{-1}} \cap U(\qb)$, we take $\widetilde{R} \in V(\qb)$ such that $\pi(\widetilde{R}) = R$. From the functoriality of height functions, it follows that 
\[ h(g(R)) = h(\phi(\widetilde{R})) = h_{V, \phi^{\ast}H}(\widetilde{R}) + O(1), 
\]
\[ h(g^{-1}(R)) = h(\psi(\widetilde{R})) = h_{V, \psi^{\ast}H}(\widetilde{R}) + O(1), 
\]
and 
\[ h(R) = h(\pi(\widetilde{R})) = h_{V, \pi^{\ast}H}(\widetilde{R}) + O(1). 
\]
On the other hand, by the definition of $D$, we have 
\[ h_{V, \phi^{\ast}H}(\widetilde{R}) + h_{V, \psi^{\ast}H}(\widetilde{R}) = 3 h_{V, \pi^{\ast}H}(\widetilde{R}) + h_{V, D}(\widetilde{R}) + O(1). 
\]
Therefore, 
\[ h(g(R)) + h(g^{-1}(R)) = 3h(R) + h_{V, D}(\widetilde{R}) + O(1), 
\]
where these $O(1)$ are independent of the choice of $R$. We notice that $\widetilde{R} \notin \B(D)$ as $\pi(\widetilde{R}) = R \notin \pi(\B(D))$. Then by \cite[Lemma 2.26(1)]{LS21}, we have 
\[ h(g(R)) + h(g^{-1}(R)) \geq 3h(R) - C, 
\]
for all $R\in X(\qb)_{g} \cap X(\qb)_{g^{-1}} \cap U(\qb)$, where $C$ is independent of the choice of $R$. Clearly we may choose $C > 0$. This completes the proof. 
\end{proof}

\begin{proof}[Proof of Theorem \ref{bdd}]  By Lemmas \ref{aux} and \ref{tech}, there is a proper Zariski closed subset $Z \subsetneq X$, such that the set 
\[ \{ P \in X(\qb)_{f^n} \cap X(\qb)_{f^{-n}} : P \text{ is } f^n\text{-periodic and } \bo_{f^n}(P) \cap Z(\qb) = \varnothing \}
\] 
is a set of bounded height in $X(\qb)$ for some positive integer $n$. Clearly we have the following inclusion
\begin{equation*}
\begin{aligned}
&\{ P \in X(\qb)_{f} \cap X(\qb)_{f^{-1}} : P \text{ is } f\text{-periodic and } \bo_{f}(P) \cap Z(\qb) = \varnothing \} \\
\subset \ &\{ P \in X(\qb)_{f^n} \cap X(\qb)_{f^{-n}} : P \text{ is } f^n\text{-periodic and } \bo_{f^n}(P) \cap Z(\qb) = \varnothing \}. 
\end{aligned} 
\end{equation*}
Therefore, the assertion follows. 
\end{proof}

\begin{remark} Let $f^{\circ}: \bba^N \to \bba^N$ be an automorphism with the extension $f: \PP^N \dashrightarrow \PP^N$. Assume that $f$ is cohomologically hyperbolic. We specify $H = \PP^N \setminus \bba^N$ in the proof of Lemma \ref{aux}. It is unclear if $\pi(\B(D)) \subset \Supp H$ for the divisor $D$ constructed there. Thus, we do not know whether the set of all $f^{\circ}$-periodic points is a set of bounded height in $\bba^N(\qb)$. 
\end{remark}

Let $f: S \dashrightarrow S$ be a birational self-map of a smooth projective surface $S$. Then $f$ is cohomologically hyperbolic, if and only if $\delta_1(f) > 1$. Thus, Theorem \ref{bdd} can be applied to $f$ when $\delta_1(f) > 1$. We note that, when $f: S \to S$ is an automorphism with $\delta_1(f) > 1$, by \cite[Theorem 6.4]{Ka08}, the set of $f$-periodic points outside a \textit{$f$-periodic} proper Zariski closed subset of $S$ is a set of bounded height.

\begin{example} A smooth projective variety $X$ is called \textit{hyper-K\"{a}hler}, if its complex analytification is simply connected and $H^0(X, \Omega^2_X)$ is spanned by a symplectic form. By definition, a hyper-K\"{a}hler variety always has even dimension. Let $f: X \to X$ be an automorphism of a hyper-K\"{a}hler variety $X$. By \cite[Theorem 1.1]{Og09}, 
\[ \delta_{2N-k} (f) = \delta_k (f) = \delta_1(f)^k 
\]
for all $0 \leq k \leq N = \dim X /2$. Thus, $f$ is cohomologically hyperbolic, if and only if $\delta_1(f) > 1$. Analogously to \cite{Ka08}, the theory of canonical heights established in \cite[Theorem 2.27]{LS21} implies that the set of $f$-periodic points outside a $f$-periodic proper Zariski closed subset of $X$ is a set of bounded height. It is unknown whether a birational self-map of a hyper-K\"{a}hler variety is cohomologically hyperbolic provided the first dynamical degree greater than one . 
\end{example}

In general, it is difficult to compute higher dynamical degrees of a birational self-map $f$. But in dimension three, we have 
\[ \delta_2(f) = \delta_1(f^{-1}). 
\]
The following is an example of birational self-maps in dimension three that are cohomologically hyperbolic. Recall that for an automorphism $f^{\circ}: \bba^N \to \bba^N$ with the extension $f: \PP^N \dashrightarrow \PP^N$, we say $f^{\circ}$ (or $f$) is \textit{regular}, if $I(f) \cap I(f^{-1}) = \varnothing$, where $I(f)$ denotes the indeterminacy locus of $f$.

\begin{example}[{\cite[Example 2.5]{GS02}}] Consider the birational map $f: \PP^3 \dashrightarrow \PP^3$ induced by the polynomial automorphism 
\[ f^{\circ}: \bba^3 \dashrightarrow \bba^3, \ \ (x, y, z) \mapsto \left( yx^d + z, y^{d+1} + x, y \right), 
\]
where 
\[ (f^{\circ})^{-1}: \bba^3 \dashrightarrow \bba^3, \ \ (x, y, z) \mapsto \left(y - z^{d+1}, z, x - z(y - z^{d+1})^d\right). 
\]
Then 
\[ I(f) = \{ [ x : 0 : z : 0] \} \ \ \text{and} \ \ I(f^{-1}) = \{ [x : y : 0 : 0] \}.  
\]
Hence $f^{\circ}$ is not regular by definition as $I(f) \cap I(f^{-1}) \neq \varnothing$. One can see that $f^{-1}$ is weakly regular, so $f^{-1}$ is algebraically $1$-stable and 
\[ \delta_1(f^{-1}) = d^2 + d + 1. 
\]
(See \cite[Definition 2.1 and below]{GS02}.) On the other hand, $f$ is not weakly regular. But one can still see that $f$ is algebraically $1$-stable and 
\[ \delta_1(f) = d+1 
\]
as follows. Write 
\[ (f^{\circ})^n(x, y, z) = (x_n, y_n, z_n)
\]
for each $n \geq 1$. Then
\[ x_{n+1} = y_n x_n^d + z_n, \ \ y_{n+1} = y_n^{d+1} + x_n, \ \ \text{and} \ \ z_{n+1} = y_n. 
\]
We have 
\[ \deg x_1 = \deg y_1 = d + 1, \ \ \text{and} \ \ \deg z_1 = 1, 
\]
\[ \deg x_2 = \deg y_2 = (d + 1)^2, \ \ \text{and} \ \ \deg z_1 = d + 1. 
\]
By induction on $n$, it is not hard to see that 
\[ \deg x_n = \deg y_n = (d + 1)^n, \ \ \text{and} \ \ \deg z_n = (d + 1)^{n-1}. 
\]
for all $n \geq 1$. This shows the algebraic stability of $f$. Hence $f$ is cohomologically hyperbolic and one can apply Theorem \ref{bdd} to this $f$. 

We refer to \cite[Example 1.11]{GS02} and \cite[Theorem 4.1]{CG04} for more examples of automorphisms of $\bba^3$ that are cohomologically hyperbolic but not regular. 
\end{example}


\section{Proofs of remaining theorems}\label{thm1}

Now we start to prove Theorem \ref{dt}. Let $f: S \dashrightarrow S$ be a dominant rational self-map of a smooth projective surface $S$, and let $H$ be an ample divisor on $S$. Recall that the first dynamical degree $\delta_1(f)$ of $f$ is define as
\[ \delta_1(f) = \lim_{i \to +\infty}\left( \deg_{1, H} (f^i) \right)^{1/i},  
\]
where $\deg_{1, H} (f) = \left( f^{\ast}(H)\cdot H \right)$. The second dynamical degree $\delta_2(f)$ of $f$ is define as 
\[ \delta_2(f) = \lim_{i \to +\infty}\left( (f^i)^{\ast}(H^2) \right)^{1/i},  
\]
which is exactly the topological degree of $f$.

\begin{lemma}\label{hin} Let $f: S \dashrightarrow S$ be a dominant rational self-map of a smooth projective surface $S$. Fix an ample divisor $H$ on $S$. Then for every $\varepsilon \in (0, 1)$, there exists $n_{\varepsilon} \in \ZZ_{>0}$ such that for all $n \geq n_{\varepsilon}$, it holds 
	\[ \frac{h_H(f^{2n}(P))}{\delta_2(f^n)} + h_H(P) \geq \frac{\varepsilon^n \deg_{1, H}(f^n)}{\delta_2(f^n)} h_H(f^n(P)) - C_n
	\]
	for all $P \in S(\qb)_f \cap U_n(\qb)$, where $U_n \subset S$ is a non-empty open subset, and $C_n > 0$ is independent of the choice of $P$. 
\end{lemma}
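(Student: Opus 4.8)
The strategy is to reduce the claimed inequality to a bigness statement for an $\RR$-divisor on a resolution of indeterminacy, and then apply Siu's numerical criterion, in the spirit of Lemma~\ref{aux} but now comparing $f^n$ with $f^{2n}$ on the surface $S$. By Hironaka's theorem choose a composition of blow-ups $\pi : V \to S$ for which both $\psi_1 := f^n\circ\pi$ and $\psi_2 := f^{2n}\circ\pi$ are morphisms $V \to S$. For $P$ in the isomorphism locus of $\pi$ let $\widetilde P = \pi^{-1}(P) \in V$; since $P\in S(\qb)_f$ we have $P\notin I(f^n)\cup I(f^{2n})$, so $\psi_i(\widetilde P)=f^{in}(P)$ for $i=1,2$, and functoriality of Weil heights gives $h_H(f^{in}(P)) = h_{V,\psi_i^{*}H}(\widetilde P) + O(1)$ and $h_H(P) = h_{V,\pi^{*}H}(\widetilde P) + O(1)$, with $n$-dependent $O(1)$'s. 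Writing $e = \delta_2(f)$ and $d_k = \deg_{1,H}(f^k)$, so that $\delta_2(f^n) = e^{n}$ by multiplicativity of the topological degree, it therefore suffices to show that the $\RR$-divisor
\[ D_n \;=\; \psi_2^{*}H \;+\; e^{n}\,\pi^{*}H \;-\; \varepsilon^{n} d_n\,\psi_1^{*}H \]
on $V$ is big for all large $n$: once $D_n$ is big, some positive multiple is effective, $\B(D_n)\subsetneq V$, and for $P$ in the non-empty open set $U_n$ consisting of points of the isomorphism locus of $\pi$ with $\pi^{-1}(P)\notin\B(D_n)$ one gets $h_{V,D_n}(\widetilde P)\geq -C_n$ by \cite[Lemma 2.26(1)]{LS21}; combining with the height identities above and dividing by $e^{n}$ yields the lemma.

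To establish bigness I would apply Siu's criterion \cite[Theorem 2.2.15]{La04} to the nef classes $A = \psi_2^{*}H + e^{n}\pi^{*}H$ and $B = \varepsilon^{n} d_n\,\psi_1^{*}H$: $D_n = A-B$ is big as soon as $A^{2} > 2\,(A\cdot B)$. The needed intersection numbers on the surface $V$ are computed directly. Since $\pi$ is birational, $\psi_2^{*}H\cdot\pi^{*}H = (\pi_{*}\psi_2^{*}H)\cdot H = (f^{2n})^{*}H\cdot H = d_{2n}$, and likewise $\psi_1^{*}H\cdot\pi^{*}H = d_n$ and $(\pi^{*}H)^2 = H^{2}$; and $(\psi_2^{*}H)^{2} = e^{2n}H^{2}$ because $\psi_2 = f^{2n}\circ\pi$ is generically finite of degree $e^{2n}$. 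Finally, because $\psi_2 = f^{n}\circ\psi_1$ with $\psi_1$ a morphism, the divisor $\psi_1^{*}\!\big((f^{n})^{*}H\big) - \psi_2^{*}H$ is effective (the standard inequality $(g\circ h)^{*}H \le h^{*}(g^{*}H)$ for a morphism $h$ and a rational map $g$, proved by pulling back a smooth resolution of the graph of $g$), so $\psi_2^{*}H\cdot\psi_1^{*}H \le \psi_1^{*}\!\big((f^{n})^{*}H\big)\cdot\psi_1^{*}H = e^{n} d_n$. Plugging in, $A^{2} = 2e^{2n}H^{2} + 2e^{n}d_{2n} \ge 2e^{n}d_{2n}$ and $A\cdot B \le 2\varepsilon^{n}e^{n}d_n^{2}$, so $A^2-2(A\cdot B)\ge 2e^n(d_{2n}-2\varepsilon^n d_n^2)$, and it is enough to verify the purely numerical inequality $d_{2n} > 2\,\varepsilon^{n}\,d_n^{2}$.

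This last inequality holds for all large $n$: the first dynamical degree exists, so $d_k^{1/k}\to\delta_1(f)$, whence $\big(d_{2n}/d_n^{2}\big)^{1/n} = d_{2n}^{1/n}/d_n^{2/n} \to \delta_1(f)^{2}/\delta_1(f)^{2} = 1$. Since $\varepsilon < 1$, for $n$ large one has $\big(d_{2n}/d_n^{2}\big)^{1/n} > \frac{1+\varepsilon}{2} > 2^{1/n}\varepsilon$, and raising to the $n$-th power gives $d_{2n} > 2\varepsilon^{n}d_n^{2}$; this choice of threshold is the $n_\varepsilon$ of the statement. (One has $d_k > 0$ for all $k$, e.g.\ by the Hodge index theorem applied to the nef classes $\pi^{*}H$ and $\psi_1^{*}H$ on $V$, so no degeneracy occurs, and $\varepsilon$ may be taken rational if one prefers $D_n$ to be a $\QQ$-divisor.)

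The main obstacle --- and the reason this is not a routine copy of Lemma~\ref{aux} --- is precisely this numerical step. Here there is no fixed gap between dynamical degrees to exploit, and $d_{2n}$ can fail to be bounded below by any fixed multiple of $d_n^{2}$ when $f$ is far from algebraically stable; it is only the decaying factor $\varepsilon^{n}$, together with the subexponential control on $d_{2n}/d_n^{2}$ coming from convergence of $d_k^{1/k}$, that makes $D_n$ big for $n\gg 0$. The remaining work is bookkeeping: checking that $P\in S(\qb)_f\cap U_n(\qb)$ really forces $\pi^{-1}(P)\notin\B(D_n)$ and $\psi_i(\widetilde P)=f^{in}(P)$, and that the $n$-dependent error terms are legitimately absorbed into $C_n$.
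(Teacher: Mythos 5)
Your proof is correct and reduces to exactly the same numerical inequality $d_{2n} > 2\varepsilon^n d_n^2$ as the paper, but it handles the key intersection number differently. The paper builds a two-stage resolution $\widetilde S \to S' \to S$ (first resolving $f^n$, then the induced map $f'_n$ on $S'$), which makes the ``cross'' term $\pi_n^{*}H\cdot\phi_n^{*}H$ computable \emph{exactly} as $\delta_2^n\,d_n$ via a double application of the projection formula. You instead take a single resolution $V\to S$ on which both $\psi_1=f^n\circ\pi$ and $\psi_2=f^{2n}\circ\pi$ are morphisms, and then only \emph{bound} $\psi_2^{*}H\cdot\psi_1^{*}H\le \delta_2^n d_n$ by showing $\psi_1^{*}\bigl((f^n)^{*}H\bigr)-\psi_2^{*}H$ is effective. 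That argument is sound: factoring $(f^n)^{*}H=p_{*}q^{*}H$ through a resolution $p,q:W\to S$ of $f^n$ and passing to a common resolution $Z$ over $V$ and $W$, the difference pulls back to $b^{*}\bigl(p^{*}p_{*}q^{*}H-q^{*}H\bigr)$, which is effective because $q^{*}H$ is nef and $p$ is a composite of point blow-ups of smooth surfaces. Since Siu's criterion only needs an \emph{upper} bound on $A\cdot B$, the inequality suffices, and your version is arguably lighter on bookkeeping (one resolution instead of two) at the cost of this extra effectivity lemma. Two small points worth aligning with the paper: because $D_n$ is a priori an $\RR$-divisor, the paper uses the augmented base locus $\B_{+}(D_n)$ and \cite[Lemma 2.26(2)]{LS21} rather than $\B(D_n)$ and part (1); your fix of taking $\varepsilon\in\QQ$ is legitimate here since $\varepsilon$ is a free parameter in $(\delta_2/\delta_1,1)$ sent to $1$ at the end of the proof of Theorem~\ref{dt}, but if you prefer to allow irrational $\varepsilon$ you should switch to $\B_{+}$. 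Also, restricting $P$ to the isomorphism locus of $\pi$ is slightly more restrictive than the paper's $U_n = S\setminus\psi_n(\B_{+}(D_n))$, though it still yields a non-empty open set and changes nothing downstream.
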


\begin{proof} We write $h = h_H$ and $\deg_1 = \deg_{1, H}$ as $H$ is fixed. For an arbitrary $\varepsilon \in (0, 1)$, since 
	\[ \lim_{i\to +\infty} \left( \frac{\deg_1 (f^{2i})}{\varepsilon^i\deg_1 (f^{i})^2} \right)^{1/i} = \frac{\delta^2_1(f)}{\varepsilon\delta^2_1(f)} > 1, 
	\]
	there exists $n_{\varepsilon} \in \ZZ_{>0}$ such that for all $n \geq n_{\varepsilon}$, we have 
	\[ \frac{\deg_1 (f^{2n})}{\varepsilon^n\deg_1 (f^{n})^2} > 2.  
	\]
	Fix an arbitrary $n \geq n_{\varepsilon}$ and let
	\[ d_n = \deg_1 (f^n), \ \ \text{and} \ \ \delta_2 = \delta_2(f). 
	\]
	Then $\delta_2(f^n) = \delta_2^n$. 
	
	Take $\psi'_n: S' = S'_n \to S$ a composition of blow-ups, such that $\phi'_n$ is a morphism. Let $f'_n : S' \dashrightarrow S'$ be the induced dominant rational map, and take $\widetilde{\psi}_n: \widetilde{S} = \widetilde{S}_n \to S'$ a composition of blow-ups, such that $\widetilde{\phi}_n$ is a morphism. 
	\begin{equation}
		\xymatrix{ & & &\ts \ar[dl]_{\widetilde{\psi}_n} \ar[dr]^{\widetilde{\phi}_n} & & \\ 
			&  &S' \ar@{-->}[rr]^{f'_n} \ar[dl]_{\psi'_n} \ar[dr]^{\phi'_n} & &S' \ar[dl]_{\psi'_n} \ar[dr]^{\phi'_n} & \\
			& S \ar@{-->}[rr]_{f^{n}} & & S \ar@{-->}[rr]_{f^n} & & S. }
	\end{equation}
	We let 
	\[ \psi_n = \psi'_n \circ \widetilde{\psi}_n : \widetilde{S} \to S, \] 
	\[ \phi_n = \phi'_n \circ \widetilde{\phi}_n : \widetilde{S} \to S, \]  
	and 
	\[ \pi_n = \psi'_n \circ \widetilde{\phi}_n = \phi'_n \circ \widetilde{\psi}_n : \widetilde{S} \to S. 
	\]
	\begin{equation}
		\xymatrix{ 
			&  & \widetilde{S}  \ar[d]_{\pi_n} \ar[dl]_{\psi_n} \ar[dr]^{\phi_n} &  \\
			& S \ar@{-->}[r]_{f^{n}} & S \ar@{-->}[r]_{f^n} & S. }
	\end{equation}
	Consider the $\RR$-divisor 
	\[ D_n = \frac{\phi_n^{\ast} H }{\delta^n_2} + \psi_n^{\ast} H - \frac{\varepsilon^n d_n}{\delta^n_2} \pi_n^{\ast} H.  
	\] 
	We claim that $D_n$ is big. 
	
	Write $\pi = \pi_n$, $\phi = \phi_n$ and $\psi = \psi_n$ to ease the notation. By Siu's numerical criterion for bigness (\cite[Theorem 2.2.15]{La04}), it is enough to show that 
	\[ \left( \phi^{\ast} H + \delta^n_2 \psi^{\ast} H \right)^2 > 2 \varepsilon^n d_n \left( \phi^{\ast} H + \delta^n_2 \psi^{\ast} H \right) \cdot \pi^{\ast} H,  
	\] 
	which is implied by
	\[ \delta^n_2 \left( \phi^{\ast}H\cdot \psi^{\ast}H \right) > \varepsilon^n d_n \left( \phi^{\ast} H + \delta^n_2 \psi^{\ast} H \right) \cdot \pi^{\ast} H. 
	\]
	Notice that by the projection formula, we have 
	\[ \left( \phi^{\ast}H\cdot \psi^{\ast}H \right) = \left( \psi_{\ast}\phi^{\ast}H\cdot H \right) = \deg_1 (f^{2n}), 
	\] 
	and 
	\[ \left( \psi^{\ast} H \cdot \pi^{\ast} H \right) = \left( H \cdot \psi_{\ast} \pi^{\ast} H \right) = d_n,  
	\]
	because $\psi$ is birational. Let us compute 
	\[ \left( \phi^{\ast} H \cdot \pi^{\ast} H \right) = \left( \widetilde{\phi}_n^{\ast} \phi_n^{\prime\ast} H \cdot \widetilde{\phi}_n^{\ast} \psi_n^{\prime\ast}H \right) = \delta_2(f'_n) \left( \phi_n^{\prime\ast} H \cdot \psi_n^{\prime\ast}H \right) = \delta^n_2 d_n. 
	\]
	On the other hand, 
	\[ \frac{\deg_1 (f^{2n})}{d_n^2} = \frac{\deg_1 (f^{2n})}{\deg_1 (f^n)^2} > 2\varepsilon^n 
	\]
	as $n \geq n_{\varepsilon}$. This proves the bigness of $D_n$.

	We let 
	\[ U_n = S \setminus \psi(\B_{+}(D_n)),  
	\] 
	where $\B_{+}(D_n)$ denotes the augmented base locus of $D_n$ (\cite[Definition 2.13]{LS21}). Notice that $\B_{+}(D_n)$ is a proper Zariski closed subset of $\widetilde{S}$ because $D_n$ is big (see e.g., \cite[Lemma 2.14(2)]{LS21}). In particular, $U_n$ is a non-empty Zariski open subset of $S$.

	For every $P \in S(\qb)_f \cap U_n(\qb)$, we take $\widetilde{P} \in \widetilde{S}(\qb)$ such that $\psi(\widetilde{P}) = P$. Then $\pi(\widetilde{P}) = f^n(P)$ and $\phi(\widetilde{P}) = f^{2n}(P)$. From the functoriality of height functions, it follows that 
	\[ h(f^{2n}(P)) = h(\phi(\widetilde{P})) = h_{\widetilde{S}, \phi^{\ast}H}(\widetilde{P}) + O(1), 
	\]
	\[ h(P) = h(\psi(\widetilde{P})) = h_{\widetilde{S}, \psi^{\ast}H}(\widetilde{P}) + O(1), 
	\]
	and 
	\[ h(f^n(P)) = h(\pi(\widetilde{P})) = h_{\widetilde{S}, \pi^{\ast}H}(\widetilde{P}) + O(1). 
	\]
	On the other hand, by the definition of $D_n$, we have 
	\[ \frac{h_{\widetilde{S}, \phi^{\ast}H}(\widetilde{P})}{\delta^n_2} + h_{\widetilde{S}, \psi^{\ast}H}(\widetilde{P}) = \frac{\varepsilon^n d_n}{\delta^n_2} h_{\widetilde{S}, \pi^{\ast}H}(\widetilde{P}) + h_{\widetilde{S}, D_n}(\widetilde{P}) + O(1). 
	\]
	Therefore, 
	\[ \frac{h(f^{2n}(P))}{\delta^n_2} + h(P) = \frac{\varepsilon^n d_n}{\delta^n_2} h(f^{n}(P)) + h_{\widetilde{S}, D_n}(\widetilde{P}) + O(1), 
	\]
	where these $O(1)$ are independent of the choice of $P$. We notice that $\widetilde{P} \notin \B_{+}(D_n)$ as $\psi(\widetilde{P}) = P \notin \psi(\B_{+}(D_n))$. Then by \cite[Lemma 2.26(2)]{LS21}, we have 
	\[ \frac{h(f^{2n}(P))}{\delta^n_2} + h(P) \geq \frac{\varepsilon^n d_n}{\delta^n_2} h(f^{n}(P)) - C_n 
	\]
	for all $P \in S(\qb)_f \cap U_n(\qb)$, where $C_n$ is independent of the choice of $P$. Clearly we may choose $C_n > 0$. This completes the proof. 
\end{proof}

From Lemmas \ref{hin} and \ref{tec}, Theorem \ref{dt} follows easily.

\begin{proof}[Proof of Theorem \ref{dt}] Fixing an ample divisor $H$ on $S$, we write $h = h_H$ and $\deg_1 = \deg_{1, H}$. We let $\delta_1 = \delta_1(f)$ and $\delta_2 = \delta_2(f)$. Then $\delta_1 > \delta_2$ by assumption. 
	
	Fix $\varepsilon \in (\delta_2/\delta_1, 1)$. Since
	\[ \lim_{i\to +\infty} \left( \frac{\varepsilon^i \deg_1 (f^i)}{\delta^i_2} \right)^{1/i} = \frac{\varepsilon \delta_1}{\delta_2} > 1, 
	\]
	there exists $n'_{\varepsilon} \in \ZZ_{>0}$, such that 
	\begin{equation} 
		\varepsilon^n \deg_1 (f^n) > 2\delta^n_2 \geq \delta^n_2 + 1
	\end{equation}
	for all $n \geq n'_{\varepsilon}$. 
	 
	By Lemma \ref{hin}, there exists $n_{\varepsilon}\in \ZZ_{>0}$ such that for all $n \geq n_{\varepsilon}$, it holds that 
	\[ \frac{h(f^{2n}(R))}{\delta_2(f^n)} + h(R) \geq \frac{\varepsilon^n \deg_{1}(f^n)}{\delta_2(f^n)} h(f^n(R)) - C_n
	\]
	for all $R \in S(\qb)_f \cap U_n(\qb)$, where $U_n \subset S$ is a non-empty open subset, and $C_n > 0$ is independent of the choice of $R$.

	Consider a fixed $n \geq \max\{ n_{\varepsilon}, n'_{\varepsilon} \}$. Let $P \in S(\qb)_f$ with Zariski dense orbit $\OO_f(P)$ in $S$. Then $\OO_{f^n}(P)$ is Zariski dense in $S$ as well. By assumption and Lemma \ref{xi4.3}(1), Conjecture \ref{dml} holds for $(S, f^n)$. Then by Lemma \ref{obs}, there exists $k_0 \in \ZZ_{>0}$ such that $f^{nk}(P) \in U_n(\qb)$ for all $k \geq k_0$. Thus, $\OO_{f^n}(f^{nk_0}(P)) \subset U_n(\qb)$. From Lemma \ref{ks13} and Lemma \ref{tec}, it follows that  
	\[ \underline{\alpha}_{f^n}(P) = \underline{\alpha}_{f^n}(f^{nk_0}(P)) \geq \frac{\varepsilon^n}{2} \deg_1 (f^n). 
	\]
	Since $\underline{\alpha}_{f}(P)^n = \underline{\alpha}_{f^n}(P)$ by Lemma \ref{est}, we obtain that for all $n \geq \max\{ n_{\varepsilon}, n'_{\varepsilon} \}$, 
	\[ \underline{\alpha}_{f}(P) \geq \varepsilon \left( \frac{\deg_1 (f^n)}{2} \right)^{1/n}, 
	\]
	and hence 
	\[ \underline{\alpha}_{f}(P) \geq \varepsilon \lim_{n\to +\infty} \left( \frac{\deg_1 (f^n)}{2} \right)^{1/n} = \varepsilon \delta_1(f). 
	\]
	Since $\varepsilon \in (\delta_2/\delta_1, 1)$ is arbitrary, we conclude that 
	\begin{equation}
		\underline{\alpha}_{f}(P) \geq \delta_1(f). 
	\end{equation}
	On the other hand, $\overline{\alpha}_f(P) \leq \delta_1(f)$ holds in general by \cite[Theorem 1.4]{Ma20}. Therefore, $\underline{\alpha}_{f}(P) = \overline{\alpha}_{f}(P) = \delta_1(f)$, which means the existence of $\alpha_{f}(P)$ and the equality $\alpha_{f}(P) = \delta_1(f)$. 
\end{proof}

\begin{proof}[Proof of Theorem \ref{aff}] By Theorem \ref{dt}, we only need to show Conjecture \ref{dml} holds for $(S, f)$. For Case $(1)$ (resp. Case $(2)$), this follows from Lemma \ref{xi4.3}(2) and Theorem \ref{bgt1.3} (resp. Theorem \ref{xia}). 
\end{proof}

\begin{remark}\label{ex-jw} $(1)$ Let $f: \PP^2 \dashrightarrow \PP^2$ be a dominant rational self-map which is the extension of an affine plane endomorphism $\bba^2 \to \bba^2$, satisfying that $\delta_1(f) > \delta_2(f)$. Then the theory of canonical heights for $f$ was established in \cite[Main Theorem]{JW12}. More precisely, the limit 
\[ \widehat{h}(P) = \lim_{n\to +\infty} \frac{h(f^n(P))}{\delta_1(f)^n} 
\] 
exists, and is finite for each $P \in \bba^2(\qb)$. We have $\widehat{h} \not\equiv 0$ and $\widehat{h} \circ f = \delta_1(f)\widehat{h}$. If $\widehat{h}(P) = 0$, then $\overline{\alpha}_f(P) \leq \delta_2(f) < \delta_1(f)$. Moreover, if $f$ is an automorphism, then $\widehat{h}(P) = 0$ if and only if $P$ is $f$-periodic. Nevertheless, the last fact is unknown in general. Theorem \ref{aff} says that $\widehat{h}(P) > 0$ if $\OO_f(P)$ is Zariski dense in $\bba^2$. 

\medskip $(2)$ In \cite[Theorem D]{JR18}, the canonical height functions were constructed for certain birational self-maps of smooth projective surfaces. It is still unclear whether for a $\qb$-point with Zariski dense orbit, the value of the canonical height function is positive. We notice that this claim implies the arithmetic degree for this $\qb$-point equals the first dynamical degree of the birational self-map. 
\end{remark}

Now we show the existence of Zariski dense orbits. The following argument is inspired by \cite[Section 8]{JSXZ21}.

\begin{proof}[Proof of Theorem \ref{dt2}] $(1)$ Fixing an ample divisor $H$ on $S$, write $h = h_H$ and $\deg_1 = \deg_{1, H}$. Let $\delta_1 = \delta_1(f)$ and $\delta_2 = \delta_2(f)$. Then $\delta_1 > \delta_2$ by assumption. 

Take $\varepsilon_0 \in (\delta_2/\delta_1, 1)$. By the proof of Theorem \ref{dt}, for each $n \gg 0$, we have 
\begin{equation*} 
\varepsilon_0^n \deg_1 (f^n) > 2\delta^n_2 \geq \delta^n_2 + 1, 
\end{equation*}
and 
\[ \frac{h(f^{2n}(R))}{\delta_2(f^n)} + h(R) \geq \frac{\varepsilon_0^n \deg_{1}(f^n)}{\delta_2(f^n)} h(f^n(R)) - C_n
\]
for all $R \in S(\qb)_f \cap U_n(\qb)$, where $U_n \subset S$ is a non-empty open subset, and $C_n > 0$ is independent of the choice of $R$. 

Take $n_0 \gg 0$. Since $f$ has infinite order, by \cite[Theorem 1.7]{JX22} (see also \cite{Am11}), there exists a $\qb$-point $P \in S(\qb)_f$ such that $\OO_{f^{n_0}}(P)$ is infinite, and $\OO_{f^{n_0}}(P) \subset U_{n_0}(\qb)$. 
From Lemma \ref{est} and Lemma \ref{tec}, it follows that  
\[ \underline{\alpha}_{f}(P)^{n_0} = \underline{\alpha}_{f^{n_0}}(P) \geq \frac{\varepsilon_0^n \deg_1 (f^{n_0})}{2} > \delta^{n_0}_2. 
\]
Thus, $\underline{\alpha}_{f}(P) > \delta_2(f)$. 

Finally, we claim that $\OO_f(P)$ is Zariski dense in $S$. Let $\overline{\OO_f(P)}$ be the Zariski closure of $\OO_f(P)$ in $S$. By \cite[Lemma 2.7]{MMSZ20}, $f^t(P)$ is contained in an $f^s$-invariant irreducible component $V$ of $\overline{\OO_f(P)}$ for some $s, t \geq 1$. Suppose that $V \subsetneq S$. Then we have $\dim V = 1$. Notice that 
\[ \underline{\alpha}_{f^s}(f^t(P)) = \underline{\alpha}_{f^s|_V}(f^t(P)) \leq \delta_1(f^s|_V) \leq \delta_2(f^s), 
\]
where the equality follows from \cite[Lemma 2.5]{MMSZ20}, the first inequality follows from \cite[Proposition 3.11]{JSXZ21}, and the second inequality follows from the fact that $\delta_1(f^s|_V)$ (resp. $\delta_2(f^s)$) is the topological degree of the map $f^s|_V$ (resp. $f^s$). By $\underline{\alpha}_{f^s}(f^t(P)) = \underline{\alpha}_{f^s}(P) = \underline{\alpha}_{f}(P)^s$ and $\delta_2(f^s) = \delta_2(f)^s$, we conclude that $\underline{\alpha}_{f}(P) \leq \delta_2(f)$. This is a contradiction. Therefore, $\OO_f(P)$ is Zariski dense in $S$. 

\medskip $(2)$ By Claim (1), there exists a $\qb$-point $Q \in S(\qb)_f$ with Zariski dense forward $f$-orbit $\OO_f(Q)$ in $S$. Since $W \subset S$ is a non-empty Zariski open subset, there is a positive integer $k$ such that $f^k(Q) \in W(\qb)$. Take $P = f^k(Q) \in W(\qb)$ and then $\OO_f(P)$ is Zariski dense in $W$. 
\end{proof}

\medskip

\end{document}